\title{3-colored asymmetric bipartite Ramsey number of connected matchings and cycles}
\author{Zhidan Luo \thanks{ College of Mathematics, Hunan University, Changsha 410082, P.R. China. Email: luodan596@hnu.edu.cn}
\and Yuejian Peng \thanks{Corresponding author. College of Mathematics, Hunan University, Changsha 410082, P.R. China. Email: ypeng1@hnu.edu.cn.
Partially  supported by National Natural Science Foundation of China (No. 11671124).}
}
\date{}
\newtheorem{defi}{Definition}[section]
\newtheorem{theo}{Theorem}[section]
\newtheorem{remark}[theo]{Remark}
\newtheorem{lemma}[theo]{Lemma}
\newtheorem{fact}[theo]{Fact}
\def\q{\hspace*{\fill}$\Box$\medskip}
\begin{document}
\maketitle
\begin{abstract}
Let $k,l,m$ be integers and $r(k,l,m)$ be the minimum integer $N$ such that for any red-blue-green coloring of $K_{N,N}$, there is a red  matching of size at least $k$ in a component, or a blue matching of at least size $l$ in a component, or a green matching of size at least $m$ in a component.
In this paper, we determine the exact value of $r(k,l,m)$ completely. Applying a technique originated by {\L}uczak that applies Szemer\'edi's Regularity Lemma to reduce the problem of showing the existence of a monochromatic cycle to show the existence of a monochromatic  matching in a component, we obtain the 3-colored asymmetric bipartite Ramsey number of cycles asymptotically.
\end{abstract}

\section{Introduction}

Let $k\geq 2$ be an integer and $H_{1},...,H_{k}$ be graphs. The Ramsey number $R(H_{1},...,H_{k})$ is the minimum integer $N$ such that any $k$-edge-coloring of $K_{N}$ contains a monochromatic $H_{i}$ in color $i$ for some $1\leq i\leq k$. If $H_{1}=H_{2}=...=H_{k}=H$, we simplify the notation as $R_{k}(H)$.

In 1967, Gerencs\'er and Gy\'arf\'as \cite{GG} showed that $R_{2}(P_{n}) =\lfloor \frac{3n}{2}-1 \rfloor$ where $P_{n}$ is a path on $n$ vertices. Bondy and Erd\H{o}s~\cite{JB73},  Faudree and Schelp \cite{FS}, and Rosta \cite{R} determined the 2-colour Ramsey number of cycles. The case of 3-coloring is more difficult and there is almost no result until 1999, {\L}uczak \cite{L} determined that for odd cycles and showed that $R(C_{n},C_{n},C_{n})\leq (4+o(1))n$ for $n$ sufficiently large. In \cite{L}, {\L}uczak introduced a technique that applies the Szemer\'edi's Regularity Lemma to reduce the problem to show the existence of a large enough monochromatic  matching in a component.

This technique has become useful in determining the asymptotical value of Ramsey number of cycles. In 2007, {\L}uczak and Figaj \cite{LF} determined the 3-colour Ramsey numbers for paths and even cycles asymptotically. These results were strengthened: Kohayakawa, Simonovits and Skokan \cite{KSS} extended to long odd  cycles, Gy\'arf\'as, Ruszink\'o, S\'ark\"ozy and Szemer\'edi \cite{GRSS} extended to long paths, and Benevides and Skokan \cite{BS} extended to long even cycles. Jenssen and Skokan \cite{JS} proved that $R_{k}(C_{n})= 2^{k-1}(n-1)+1$ for every $k$ and sufficiently large odd $n$,  Day and Johnson \cite{DJ} showed that it does not hold for all $k$ and $n$.

It is natural to replace the underlying complete graph by a complete bipartite graph. Let $k\geq 2$ be an integer and given bipartite graphs $H_{1},...,H_{k}$. The bipartite Ramsey number $br(H_{1},...,H_{k})$ is the minimum integer $N$ such that any $k$-edge-coloring of $K_{N,N}$ contains a monochromatic $H_{i}$ in color $i$ for some $1\leq i\leq k$. The study of bipartite Ramsey number was initiated in the early 70s by Faudree and Schelp \cite{FS1} and independently by Gy\'arf\'as and Lehel \cite{GL} who showed that
    \[br_{2}(P_{n})= \left \{
    \begin{array}{ll}
      n-1 & \mbox {if $n$ is even,}\\
      n & \mbox {if $n$ is odd.}
    \end{array}
    \right.
    \]
Zhang and Sun \cite{ZS} determined $br(C_{2n},C_{4})=n+1$ for $n\geq 4$ and Zhang, Sun and Wu \cite{ZSW} determined the value of $br(C_{2n}, C_{6})$ for $n\geq 4$. Goddard, Henning and Oellermann \cite{GHO} determined $br(C_{4},C_{4},C_{4})=11$. Joubert \cite{J} showed that $$br(C_{2t_{1}},C_{2t_{2}},...,C_{2t_{k}})\leq k(t_{1}+t_{2}+...+t_{k}-k+1)$$ where $t_{i}$ is an integer and $2\leq t_{i}\leq 4$ for all $1\leq i\leq k$. Recently, Shen, Lin and Liu \cite{SLL} gave the asymptotic value of $br(C_{2n}, C_{2n})$. In \cite{LP} and \cite{LP1}, Liu and Peng gave the asymptotic value of $br(C_{2\lfloor \alpha_{1} n\rfloor}, C_{2\lfloor \alpha_{1} n\rfloor})$ and $br(C_{2\lfloor \alpha_{1} n\rfloor}, C_{2\lfloor \alpha_{1} n\rfloor},...,C_{2\lfloor \alpha_{r} n\rfloor})$ in some conditions. They also gave a minimum degree condition. The best known lower bound on $br_{2}(K_{n,n})$ is due to Hattingh and Henning \cite{HH} and the best known upper bound is due to Conlon \cite{C}.

\begin{defi}\label{def2.1}
  We say that $M$ is a $k$-connected matching in graph $G$, if $M$ is a component (a maximal connected subgraph) and the size of a maximum matching in $M$ is at least $k$.
 \end{defi}

 \begin{defi}\label{def1.1}
  Let $r(k,l,m)$ denote the smallest integer $n$ such that for any 3-coloring of $K_{n,n}$, there is a monochromatic $k$-connected matching in color 1 or a monochromatic $l$-connected matching in color 2 or a monochromatic $m$-connected matching in color 3.
 \end{defi}

   Buci\'c, Letzter and Sudakov \cite{BLS} have determined the exact value of $r(k,l,l)$ by applying K\"onig's theorem as a tool.

  \begin{theo}[Buci\'c, Letzter, Sudakov \cite{BLS}] \label{theo1.1}
    \[r(k,l,l)= \left \{
    \begin{array}{llll}
      k+2l-2 & \mbox {if $l \leq \frac{k+1}{2}$,}\\
      4l-2 & \mbox {if $\frac{k+1}{2} < l \leq \frac{2k}{3}$,}\\
      2k+l-2 & \mbox {if $\frac{2k}{3} < l < k$,}\\
      k+2l-2 & \mbox {if $k \leq l$.}
    \end{array}
    \right.
    \]
  \end{theo}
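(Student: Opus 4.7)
The plan is to prove matching upper and lower bounds for each of the four cases.

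For the lower bound, in each case I construct an explicit 3-edge-coloring of $K_{n-1, n-1}$ (where $n$ is the claimed value) with no red $k$-connected matching and no blue or green $l$-connected matching. I partition each part of $K_{n-1, n-1}$ into three blocks of sizes $l-1, l-1, k-1$ in the first and fourth cases (with suitably modified block sizes in the two middle cases), and assign a color to each block rectangle $A_i \times B_j$ by a Latin-square pattern. Each color class then splits into three vertex-disjoint complete bipartite components, and the block sizes are tuned so that every such rectangle has matching strictly less than the corresponding threshold; verifying that no forbidden connected matching exists is then immediate.

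For the upper bound I argue by contradiction: suppose a 3-edge-coloring of $K_{n,n}$ (with $n$ equal to the claimed value) has no red $k$-connected matching and no blue or green $l$-connected matching. In every component $C$ of the blue graph, K\"onig's theorem gives a minimum vertex cover $X_C$ of size at most $l-1$; similarly for green, with covers $Y_D$. Setting $X = \bigcup_C X_C$ and $Y = \bigcup_D Y_D$, every blue edge is covered by $X_C$ in its component and so no blue edge survives on $V \setminus X$; likewise no green edge survives on $V \setminus Y$. Consequently every edge of $K_{n,n}$ inside $V \setminus (X \cup Y)$ is red, and if I can arrange $|A \setminus (X \cup Y)|, |B \setminus (X \cup Y)| \geq k$, a complete red $K_{k,k}$ appears, yielding a red $k$-connected matching and the desired contradiction.

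The main lever is the flexibility to place each K\"onig cover $X_C$ on a chosen side of the bipartition $(A,B)$, since the minimum vertex covers of a bipartite graph form a distributive lattice (Dulmage--Mendelsohn). I assign covers to sides so that the total deletion from each of $A$ and $B$ is as small as possible given all blue and green components. In the regimes $l \leq (k+1)/2$ and $l \geq k$, this balancing yields a deletion of at most $l-1$ per side per color, giving $r(k,l,l) \leq k + 2(l-1) = k + 2l - 2$. In the two middle regimes the naive balancing is lossy, and instead one augments the pure red rectangle on $V \setminus (X \cup Y)$ with red edges incident to the cover vertices themselves -- red components can absorb such edges while remaining connected -- and the two competing balancings of the covers across $A$ and $B$ give rise respectively to the thresholds $4l-2$ and $2k+l-2$.

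The main obstacle is exactly the two middle cases, where the forbidden red structure cannot be produced from a single complete red rectangle. There the argument must track which vertices of $X$ and $Y$ admit enough red edges back into $V \setminus (X \cup Y)$ to extend the rectangle into a single red connected component of matching $\geq k$, and must carry out a careful subsidiary case split between the two cover balancings. This is where the piecewise-linear formulas $4l-2$ and $2k+l-2$ actually emerge and must be matched tightly to the corresponding lower-bound constructions.
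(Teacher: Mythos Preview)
This theorem is not proved in the present paper: it is quoted from Buci\'c--Letzter--Sudakov~\cite{BLS} and used as a black box (e.g.\ in the proofs of Lemmas~\ref{lemma5.1} and~\ref{lemma5.3}). So there is no proof here to compare against; the relevant comparison is with the argument in~\cite{BLS}, which, like the proofs of the new results in Section~4 of this paper, proceeds by induction together with the removal of a \emph{bounded} number of K\"onig cover vertices from carefully chosen large components.

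Your upper-bound sketch contains a genuine gap. You set $X=\bigcup_C X_C$ where $X_C$ is a minimum vertex cover of the blue component $C$, and then claim that ``balancing'' gives a deletion of at most $l-1$ per side per colour. That is false: the bound $|X_C|\le l-1$ holds per component, but the number of blue components is not bounded, so $|X|=\sum_C |X_C|$ can be of order $n$. For a concrete obstruction, let the blue subgraph be a perfect matching of $K_{n,n}$: every blue component has matching number~$1<l$, yet any vertex cover of the blue graph has size $n$, so however you distribute the covers across the two sides you delete at least $n/2$ vertices from each, and no red $K_{k,k}$ rectangle survives. Thus the step ``arrange $|A\setminus(X\cup Y)|,\,|B\setminus(X\cup Y)|\ge k$'' fails, and the appeal to Dulmage--Mendelsohn does not rescue it, since that only redistributes a cover whose total size is already too large.

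The actual arguments (both in~\cite{BLS} and in this paper for $r(k,l,m)$) avoid this by never taking a global cover of a colour class. Instead they observe that only a bounded number of monochromatic components can have matching number close to the threshold (because such components occupy at least $l$ vertices on each side and $|V_i|$ is bounded), remove one or a few cover vertices from each of those components, and invoke an inductive hypothesis on the resulting smaller $K_{n',n'}$. Your proposal would need to be reworked along these lines; in particular the ``two middle regimes'' analysis, which you leave as ``a careful subsidiary case split'', is where essentially all the work lies.
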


  Applying the above theorm and Regularity Lemma ({\L}uczak's technique), they have determined that $br(C_{2n},C_{2n}, C_{2n})=(3+o(1))n$ when $n$ is sufficiently large.  They commented that it is natural to consider the asymmetric 3-color bipartite Ramsey numbers for cycles, and  determining $r(k,l,m)$ is interesting in its own right.
  Applying K\"onig's theorem as in \cite{BLS}, we determine the exact value of $r(k,l,m)$ completely as stated in the following theorem.

  \begin{theo}\label{theo3}
    Let $2\leq k< l< m$, then we have
    \[r(k,l,m)=\left \{
    \begin{array}{lll}
     k+2m-2 & \mbox {if $3\leq k< l <m \leq l+\frac{k-1}{2}$,}\\
     2k+2l-3 & \mbox {if $3\leq k< l, l+\frac{k-1}{2}< m< k+l-1$,}\\
     k+l+m-2 &\mbox {if $2\leq k< l, m\geq k+l-1$.}
    \end{array}
    \right.
    \]
  \end{theo}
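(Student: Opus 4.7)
My plan is to prove Theorem~\ref{theo3} by establishing matching lower and upper bounds in each of the three cases, using K\"onig's theorem as the main tool, following the strategy of Buci\'c, Letzter and Sudakov for $r(k,l,l)$.

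For the lower bound, in each case I will construct an explicit $3$-edge-coloring of $K_{N-1,N-1}$, where $N$ is the claimed value, that avoids a $k$-, $l$-, or $m$-connected matching in the respective colors. Each construction partitions $A$ and $B$ into a small number of blocks and colors every bipartite block monochromatically, so that components and matching sizes can be read off directly. For Case~3 ($m \geq k+l-1$) the construction is the obvious ``row'' coloring: partition $A$ into parts of sizes $k-1,\,l-1,\,m-1$ and assign color~$i$ to every edge incident to the $i$-th part, giving three single components with matching exactly one below the threshold. For Cases~1 and~2 one needs to split a color class into two bipartite components of balanced size; the case-boundary inequalities $m\leq l+\frac{k-1}{2}$ and $m< k+l-1$ are precisely the ranges in which such split constructions yield a larger $N$ than the naive row coloring.

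For the upper bound, suppose a $3$-edge-coloring of $K_{N,N}$ (with $N$ equal to the claimed value) contains no $k$-, $l$-, or $m$-connected matching in colors $1,\,2,\,3$. By K\"onig's theorem each component of the color-$i$ subgraph admits a vertex cover of size at most $t_i-1$, where $(t_1,t_2,t_3)=(k,l,m)$; I fix such minimum covers for every component. For each vertex $v$ and color $i$, call $v$ \emph{$i$-free} if $v$ lies outside the cover of its color-$i$ component. The driving structural fact is that if $u \in A$ and $w \in B$ are both $i$-free then the edge $uw$ cannot have color~$i$; otherwise $uw$ would be an uncovered color-$i$ edge within its component. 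In particular, no edge can run between two triple-free vertices, and any edge whose endpoints are each free in two common colors must carry the third color.

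The core argument is then a careful accounting of block sizes: I partition $A$ (and $B$) according to the subset of colors for which each vertex is free, use the K\"onig bound to constrain the covered-side sizes, and combine with the ``forbidden-color'' rules above and $|A|=|B|=N$ to derive linear inequalities forcing $N$ to be at most the claimed value minus one. I expect Case~2 ($r=2k+2l-3$) to be the hardest: there the extremal coloring splits two of the three colors into two large components each, so the bookkeeping must handle multiple large same-color components rather than a single dominant one, and the boundary inequality $l+\frac{k-1}{2}<m<k+l-1$ must be used precisely to rule out the more economical configurations that give the Case~1 and Case~3 values.
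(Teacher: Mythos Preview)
Your lower-bound strategy matches the paper's: explicit block colorings (Lemma~\ref{lemma5.2} gives the two-parameter family covering Cases~1 and~2, and Fact~\ref{fact2.2} gives the row coloring for Case~3).

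For the upper bound your route diverges from the paper's, and the sketch has a real gap. The paper does \emph{not} argue via a global free/covered partition and linear inequalities. Instead it proceeds by induction on $m$: at each step it locates the (at most two) green components whose matching number is exactly $m$, deletes one cover vertex from each together with dummy vertices on the other side to keep the bipartition balanced, and invokes the hypothesis for $m-1$. The induction bottoms out at the already-known value $r(k,l,l)=k+2l-2$ from Theorem~\ref{theo1.1}. The only substantial step is Lemma~\ref{lemma5.3}, which proves $r(k,l,k+l-1)=2k+2l-3$ by a case analysis on the green components with matching number in $[l,k+l-2]$, removing $k-1$ well-chosen vertices from each side and reducing to $r(k,l,l)$; Cases~1, 2, 3 of Theorem~\ref{theo3} then follow by short inductions or monotonicity from this single anchor.

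The gap in your plan is that K\"onig bounds cover sizes only \emph{per component}, not globally per color: the total number of $i$-covered vertices on one side can be as large as $N$ if color~$i$ splits into many components. Hence the step ``partition $A$ by the set of free colors, use the K\"onig bound to constrain the covered-side sizes, and derive linear inequalities forcing $N$ below the claimed value'' does not go through as stated; you need additional structure tracking which components the covered vertices lie in and how those components intersect across colors and across the two sides. That is exactly where the paper invests its effort (the component-by-component case analysis in Lemma~\ref{lemma5.3}), and your outline gives no indication of how you would recover that information from a pure type-counting argument. A warning sign is that the Case~2 value $2k+2l-3$ is independent of $m$: the bound is governed by the $(k,l)$ structure that remains after the large green components are cut down, which is precisely what the paper's reduction to $r(k,l,l)$ captures and what a one-shot inequality on block sizes cannot.
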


  Then applying the technique of {\L}uczak, we obtain the asymptotic value of \\  $br(C_{2\lfloor \alpha_{1}n\rfloor}, C_{2\lfloor \alpha_{2}n\rfloor}, C_{2\lfloor \alpha_{3}n\rfloor})$.

  \begin{theo}\label{theo2}
    Let $\alpha_{1}>0$, then
    \[br(C_{2\lfloor \alpha_{1}n\rfloor}, C_{2\lfloor \alpha_{2}n\rfloor}, C_{2\lfloor \alpha_{3}n\rfloor})=\left \{
    \begin{array}{lll}
      (\alpha_{1}+ 2\alpha_{3}+ o(1))n &\mbox {if $\alpha_{1}< \alpha_{2}< \alpha_{3}\leq \frac{\alpha_{1}}{2}+\alpha_{2}$,}\\
      (2\alpha_{1}+ 2\alpha_{2}+ o(1))n &\mbox {if $\alpha_{1}< \alpha_{2}, \frac{\alpha_{1}}{2}+\alpha_{2}< \alpha_{3}< \alpha_{1}+\alpha_{2}$,}\\
      (\alpha_{1}+ \alpha_{2}+ \alpha_{3}+ o(1))n &\mbox {if $\alpha_{1}< \alpha_{2}, \alpha_{3}\geq \alpha_{1}+\alpha_{2}$.}
    \end{array}
    \right.
    \]
  \end{theo}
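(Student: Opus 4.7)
The plan is to prove the upper and lower bounds separately, following {\L}uczak's reduction via the Regularity Lemma on the upper-bound side and producing explicit extremal bipartite colorings on the lower-bound side. The reduction converts the problem of finding long monochromatic cycles in $K_{N,N}$ into finding monochromatic connected matchings in a reduced bipartite graph, at which point Theorem~\ref{theo3} is directly applicable.

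For the upper bound, write $c$ for the leading coefficient on the right-hand side of the desired formula in the appropriate regime, i.e., $c=\alpha_{1}+2\alpha_{3}$, $c=2\alpha_{1}+2\alpha_{2}$, or $c=\alpha_{1}+\alpha_{2}+\alpha_{3}$, and fix a small $\eta>0$. Take $N\ge(c+\eta)n$ and consider any $3$-edge-coloring of $K_{N,N}$. I apply the bipartite Szemer\'edi Regularity Lemma with a sufficiently small regularity parameter $\varepsilon$ to produce $\varepsilon$-regular partitions of both sides into $t$ clusters of equal size, where $t$ is large but bounded by a function of $\varepsilon$. Form the reduced bipartite graph $R$ on the clusters by placing an edge between two clusters whenever their pair is $\varepsilon$-regular in some color with density at least a fixed threshold $\delta$, and color that edge by the majority color. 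After discarding the $o(t^{2})$ irregular or sparse pairs, $R$ contains a subgraph that is essentially $K_{t',t'}$ with $t'=(1-o(1))t$. Now apply Theorem~\ref{theo3} to $R$ with parameters $k^{*}=\lceil\alpha_{1}nt'/N\rceil$, $l^{*}=\lceil\alpha_{2}nt'/N\rceil$, $m^{*}=\lceil\alpha_{3}nt'/N\rceil$; the relative order of these parameters matches that of $(\alpha_{1},\alpha_{2},\alpha_{3})$, and in each regime the inequality $N\ge(c+\eta)n$ translates directly into $t'\ge r(k^{*},l^{*},m^{*})$. Hence $R$ contains a monochromatic connected matching of size at least $k_{i}^{*}$ in some color $i$. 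By standard embedding lemmas for regular pairs, such a connected matching of cluster-pairs can be stitched into a single even cycle in the original $K_{N,N}$ of length exactly $2\lfloor\alpha_{i}n\rfloor$ in color $i$: within each matched cluster-pair we traverse a long alternating path covering almost all of the $\approx 2N/t$ vertices of that pair, and the connecting structure of the matching supplies short bridges between consecutive matched cluster-pairs, allowing us to tune the total length to the exact target.

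For the lower bound, I exhibit for each regime an explicit $3$-coloring of $K_{N,N}$ with $N=(c-o(1))n$ that contains no monochromatic cycle of length $2\lfloor\alpha_{i}n\rfloor$ in color $i$ for any $i$; these colorings are the bipartite blow-ups of the extremal colorings that establish the lower bound in Theorem~\ref{theo3}. For instance, when $\alpha_{3}\ge\alpha_{1}+\alpha_{2}$, partition each side of $K_{N,N}$ into three blocks of sizes roughly $\alpha_{1}n-1,\ \alpha_{2}n-1,\ \alpha_{3}n-1$ and color the bipartite subgraphs between blocks so that every monochromatic component in color $i$ has a maximum matching strictly smaller than $\lfloor\alpha_{i}n\rfloor$. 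Since a cycle $C_{2l}$ requires a matching of size $l$, no monochromatic cycle of the forbidden length can exist. Analogous $2$- or $3$-block constructions, derived from the proof of Theorem~\ref{theo3}, handle the remaining regimes.

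The main obstacle is the careful synchronisation of parameters in the upper-bound argument so that (a) after the Regularity-Lemma cleanup and rounding one still has $t'\ge r(k^{*},l^{*},m^{*})$ with some slack, and (b) the embedding step produces a cycle of \emph{exactly} the length $2\lfloor\alpha_{i}n\rfloor$ rather than merely a cycle of comparable order. The second point requires the standard but technical combination of a long alternating path inside a super-regular sub-pair with an adjustment mechanism to tune the cycle length by $O(1)$. Verifying that the lower-bound constructions block cycles of every relevant even length, rather than only reproducing the maximum-matching bound, is a further routine case check.
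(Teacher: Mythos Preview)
Your lower-bound argument is fine and matches the paper: the extremal colourings for Theorem~\ref{theo3} already block the relevant connected matchings, hence the cycles; equivalently, $br(C_{2\lfloor\alpha_1 n\rfloor},C_{2\lfloor\alpha_2 n\rfloor},C_{2\lfloor\alpha_3 n\rfloor})\ge r(\lfloor\alpha_1 n\rfloor,\lfloor\alpha_2 n\rfloor,\lfloor\alpha_3 n\rfloor)$.

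The upper-bound argument, however, has a real gap at the sentence ``After discarding the $o(t^{2})$ irregular or sparse pairs, $R$ contains a subgraph that is essentially $K_{t',t'}$ with $t'=(1-o(1))t$.'' What the Regularity Lemma actually gives you (see Lemma~\ref{lemma6.1}(d)) is that every cluster has at most $\varepsilon k$ non-neighbours on the other side; this does \emph{not} imply that $R$ contains a balanced complete bipartite subgraph on $(1-o(1))k$ clusters per side. A standard counterexample is the bipartite graph $K_{t,t}$ minus a perfect matching: every vertex has a single non-neighbour, yet the largest balanced complete bipartite subgraph has only $t/2$ vertices per side. Consequently you cannot invoke Theorem~\ref{theo3} directly on the reduced graph.

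The paper handles precisely this obstacle in Section~5.1 by proving a robust version of Theorem~\ref{theo3} for \emph{almost} complete bipartite graphs (Theorems~\ref{theo4.1}--\ref{theo4.3}). This extension is not routine: one groups small components into ``virtual components'', shows (Lemma~\ref{lemma4.1}) that only $O(\varepsilon n)$ vertices meet non-edges lying outside every virtual component, adds the remaining non-edges colour by colour without changing the matching numbers, and bounds (Lemma~\ref{lemma4.2}) the number of missing edges inside each virtual component. Only after this does one apply Theorem~\ref{theo3} to the completed graph. Your proposal needs either this lemma package or some substitute that genuinely handles the $\varepsilon k$ missing neighbours per cluster; as written, the step ``apply Theorem~\ref{theo3} to $R$'' fails.
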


  The organization of the paper is: In section 2, we give some preliminaries on definitions and useful facts. In section 3, we give the proof of $r(k,l)= k+l-1$ which mentioned in \cite{BLS} without a detailed proof. In section 4, we give the proof of Theorem \ref{theo3}.  As mentioned earlier, applying Theorem \ref{theo3} and Regularity lemma, we can obtain Theorem \ref{theo2}. This technique of {\L}uczak has become fairly standard in this area, for the completeness, we give the proof of Theorem \ref{theo2} in Appendix.

  \section{Preliminaries}

  We assume that bipartite graphs under consideration have bipartition $V_1\cup V_2$.
  \begin{defi}\label{def2.2}
   We say that a component is in $V_{i}$ if there exist a minimum vertex cover of this component in $V_{i}$.
  \end{defi}

  \begin{defi}\label{def2.3}
    We call a vertex a cover vertex of a component if there is a minimum vertex cover of this component containing it. Furthermore, if this component is monochromatic in color $i$, then we call a cover vertex of this component an $i$ cover vertex.
  \end{defi}

  \begin{remark}\label{remark2.1}
    If a component $C$ is in $V_{1}$, then $C\cap V_{1}$ is a minimum vertex cover of $C$ and each vertex in $C\cap V_{1}$ is a cover vertex. \q
  \end{remark}

  \begin{remark}\label{remark2.2}
    If $C$ is a $k$-connected matching, then $|C\cap V_{1}|\geq k$ and $|C\cap V_{2}|\geq k$. Moreover, if no minimum vertex cover of $C$ is in $V_{1}$, then $|C\cap V_{1}|\geq k+1$. \q
  \end{remark}

  \begin{defi}\label{def2.4}
    We call a vertex a red-blue vertex, if this vertex belongs to the intersection of a red component and a blue component.
  \end{defi}

  The low bound of $r(k_{1}, k_{2},..., k_{p})$ can be easily obtained by constructing:

  Let $n= \sum_{i=1}^p k_{i}-p$ and $K_{n,n}$ with vertex set $V_{1}\cup V_{2}$. Partition $V_{1}$ into $p$ parts $\cup ^{p}_{i=1}S_{i}$ with $|S_{i}|= k_{i}-1$. Coloring all edges between $S_{i}$ and $V_{2}$ in $i$-th color . It is easy to see that there is no monochromatic $k_{i}$-connected matching for any $i\in [1,p]$.

  \begin{fact}\label{fact2.2}
    $r(k_{1}, k_{2},..., k_{p})\geq \sum_{i=1}^p k_{i}-p+1$. \q
  \end{fact}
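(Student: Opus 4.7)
The plan is simply to verify that the construction described immediately before the fact witnesses the claimed lower bound. Set $n = \sum_{i=1}^p k_i - p$ and partition $V_1$ of $K_{n,n}$ into blocks $S_1,\ldots,S_p$ with $|S_i| = k_i - 1$, coloring every edge between $S_i$ and $V_2$ in color $i$. First I would verify this is a valid $p$-coloring: $\sum_i |S_i| = \sum_i (k_i - 1) = n = |V_1|$, so the $S_i$ partition $V_1$ and every edge of $K_{n,n}$ receives exactly one color.

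Next I would identify the color-$i$ components. Every color-$i$ edge has an endpoint in $S_i$, so the edges of color $i$ form a complete bipartite graph between $S_i$ and $V_2$, while the vertices of $V_1 \setminus S_i$ are isolated in color $i$. Hence the unique non-trivial color-$i$ component is $K_{k_i - 1,\, n}$. Its maximum matching has size $\min(k_i - 1,\, n) = k_i - 1$ (the inequality $n \geq k_i - 1$ being immediate from $n = \sum_j (k_j - 1)$ and $k_j \geq 1$). Therefore no color-$i$ component contains a matching of size $k_i$, and the coloring has no monochromatic $k_i$-connected matching in color $i$ for any $i \in \{1,\ldots,p\}$.

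This shows $r(k_1,\ldots,k_p) > n$, i.e.\ $r(k_1,\ldots,k_p) \geq \sum_{i=1}^p k_i - p + 1$, as claimed. There is essentially no obstacle here: the entire content of the fact is the construction itself, and the argument reduces to reading off the matching number of a complete bipartite graph. The only small sanity check is the inequality $n \geq k_i - 1$ ensuring the color-$i$ matching number is exactly $k_i - 1$ rather than smaller, which is automatic.
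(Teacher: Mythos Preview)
Your proposal is correct and follows exactly the paper's approach: the paper presents the same block construction immediately before the fact and simply asserts that no color-$i$ component has a matching of size $k_i$, while you spell out the verification that the unique nontrivial color-$i$ component is $K_{k_i-1,n}$ with matching number $k_i-1$. There is no divergence in method.
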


  One of the most important tools in this paper is K\"onig's theorem:

  \begin{theo}\label{theo1}
    In a bipartite graph, the size of a maximum matching is equal to the number of vertices in a minimum vertex cover.\q
  \end{theo}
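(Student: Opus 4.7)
The plan is to prove König's theorem by the classical augmenting-path construction, which is cleanest in the bipartite setting. The easy direction $|M|\le |C|$ for every matching $M$ and every vertex cover $C$ is immediate: distinct matching edges require distinct cover vertices. The entire task is therefore to exhibit, for a fixed maximum matching $M$, a vertex cover of size exactly $|M|$.

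The construction I would use is the standard alternating-reachability one. Fix a maximum matching $M$, let $U\subseteq V_1$ be the set of $M$-unsaturated vertices in $V_1$, let $Z$ denote the set of all vertices reachable from $U$ by $M$-alternating paths (paths whose first edge is not in $M$ and whose subsequent edges alternate between non-$M$ and $M$ edges), and define $C=(V_1\setminus Z)\cup (V_2\cap Z)$. Two claims then need to be verified: (i) $C$ is a vertex cover, and (ii) $|C|=|M|$.

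For (i), I would argue by contradiction: an edge $uv$ with $u\in V_1\cap Z$ and $v\in V_2\setminus Z$ would allow us to extend the alternating path reaching $u$ via the non-matching edge $uv$, forcing $v\in Z$, a contradiction. For (ii), I would show each matching edge contributes exactly one endpoint to $C$: every vertex of $V_1\setminus Z$ is $M$-saturated because $U\subseteq Z$; every vertex of $V_2\cap Z$ is $M$-saturated, since an unsaturated such vertex would produce an $M$-augmenting path, contradicting maximality of $M$; and no matching edge can have one endpoint in $V_1\setminus Z$ and the other in $V_2\cap Z$, since an edge $uv\in M$ with $u\in V_1\setminus Z$ and $v\in V_2\cap Z$ would allow us to extend the alternating path at $v$ across $uv$ to place $u\in Z$, symmetric to the coverage argument.

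The main obstacle is the parity bookkeeping of alternating paths in these two extension arguments, and this is precisely where the hypothesis enters: in a bipartite graph, every alternating walk alternates sides in a controlled way, so extensions always move between $V_1$ and $V_2$ as required. In a general graph odd cycles can subvert the reachability/alternation pattern, which is why the equality $\nu(G)=\tau(G)$ fails without bipartiteness (for instance on any odd cycle).
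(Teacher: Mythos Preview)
Your argument is the standard augmenting-path proof of K\"onig's theorem and is correct. One small point: in your verification of (i), you only mention extending along a \emph{non}-matching edge $uv$; you should also note that if $uv\in M$ then $v$ already lies on the alternating path reaching $u$ (since that path, ending in $V_1$, must end with an $M$-edge), so $v\in Z$ in that case too. With that caveat the proof is complete.

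As for comparison with the paper: there is nothing to compare. The paper does not prove this statement at all; it simply quotes K\"onig's theorem as a classical tool (note the bare \verb|\q| after the statement, with no proof text) and then invokes it repeatedly in Sections~3 and~4. So you have supplied a full proof where the paper supplies none.
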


  \section{The exact value of r(k,l)}

  \begin{fact}\label{fact2.3}
    $r(2,2)=3$.
  \end{fact}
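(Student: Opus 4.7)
The lower bound $r(2,2)\geq 3$ is immediate from Fact~\ref{fact2.2} applied with $p=2$ and $k_{1}=k_{2}=2$: the coloring produced there splits $K_{2,2}$ into two monochromatic stars, neither of which contains a matching of size $2$.

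For the upper bound $r(2,2)\leq 3$, I would argue by contradiction. Assume a red/blue coloring of $K_{3,3}$ contains no monochromatic $2$-connected matching. Then every red component and every blue component has maximum matching $1$, so by K\"onig's theorem (Theorem~\ref{theo1}) each such component has a minimum vertex cover of size $1$, i.e.\ is a star.

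The core step is then a short edge count. I would show that any spanning subgraph of $K_{3,3}$ whose components are (vertex-disjoint) stars has at most $4$ edges. If there are $p\geq 1$ non-trivial stars with edge-sizes $s_{1},\dots,s_{p}$, then each star uses $s_{i}+1$ vertices, so $\sum(s_{i}+1)\leq 6$ and $\sum s_{i}\leq 6-p$; moreover each individual star in $K_{3,3}$ is a $K_{1,s_{i}}$ with $s_{i}\leq 3$. A three-line case check on $p\in\{1,2,3\}$ (the case $p\geq 4$ being infeasible since each star has at least one edge) yields $\sum s_{i}\leq 4$, with equality realised by two vertex-disjoint copies of $K_{1,2}$ whose centres lie on opposite sides. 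Since the red and blue subgraphs partition $E(K_{3,3})$, this gives $9=|E(K_{3,3})|\leq 4+4=8$, a contradiction.

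I do not anticipate any genuine obstacle; the only point that needs care is that for $p=1$ the naive estimate $e\leq 6-p=5$ must be tightened via the per-star bound $s_{1}\leq 3$, so that the overall maximum of $4$ edges actually holds. Combined with the lower bound, this yields $r(2,2)=3$.
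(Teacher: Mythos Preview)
Your proof is correct, and it takes a genuinely different route from the paper's.

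The paper argues the upper bound directly and locally: by pigeonhole some vertex $v\in V_{1}$ is incident to two edges of the same colour, say red edges $vu_{1},vu_{2}$; then every edge from $u_{1}$ or $u_{2}$ to $V_{1}\setminus\{v\}$ must be blue (else a red $2$-connected matching appears), and those blue edges already form a blue $2$-connected matching. No K\"onig, no counting.

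Your approach instead globalises: K\"onig forces every monochromatic component to be a star, and then a vertex-count shows that a star-forest inside $K_{3,3}$ has at most $4$ edges, contradicting $4+4<9$. This is slightly longer to write out but is more in the spirit of the rest of the paper, where K\"onig's theorem is the workhorse; it also makes transparent \emph{why} the extremal number is what it is (the $4$-edge bound is sharp, realised by two $K_{1,2}$'s with centres on opposite sides). The paper's argument, by contrast, is quicker and entirely self-contained, needing no appeal to Theorem~\ref{theo1}. Both are perfectly adequate for this base case.
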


  \begin{proof}
    By Fact \ref{fact2.2}, $r(2,2) \geq 3$, so we need to show that $r(2,2) \leq 3$. If $r(2,2)\neq 3$, then there exist a 2-edge-coloring of $K_{3,3}$ such that there is no monochromatic 2-connected matching. Since every vertex has degree three, every vertex is incident to at least two edges in the same color. W.L.O.G, suppose that $v\in V_{1}$ is incident to two red edges $vu_{1}, vu_{2}$. Then the edges connecting $u_{1}$, $u_{2}$ with others vertices in $V_{1}$ must be blue for there is no red 2-connected matching. But this generates a blue 2-connected matching. A contradiction. So we have shown that $r(2,2)\leq 3$. \q
  \end{proof}

  \begin{lemma}\label{lem2.4}
    $r(k,k)=2k-1$ for any $k\geq 2$.
  \end{lemma}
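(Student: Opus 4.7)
The lower bound $r(k,k) \geq 2k-1$ follows from Fact~\ref{fact2.2} with $p = 2$, so the task is to prove the upper bound: in any red/blue coloring of $K_{n,n}$ with $n = 2k-1$, produce a monochromatic $k$-connected matching. My approach combines pigeonhole on a single vertex with K\"onig's theorem applied to one chosen monochromatic component.

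First, I would pick any $u \in V_1$; its $2k-1$ incident edges cannot be split so that fewer than $k$ are red and fewer than $k$ are blue, so by symmetry between the two colors I may assume the red neighborhood $R \subseteq V_2$ of $u$ satisfies $|R| \geq k$. Let $C$ be the red component containing $u$, so $R \subseteq C \cap V_2$. Assuming for contradiction that no red $k$-connected matching exists, K\"onig's theorem gives a minimum vertex cover of $C$ of size at most $k-1$. Since $|C \cap V_2| \geq |R| \geq k$, no such cover can be contained in $V_2$, so some minimum vertex cover $S$ of $C$ lies in $V_1$. A short argument using connectedness (namely, any $w \in (C \cap V_1) \setminus S$ would still carry some red edge of $C$, and that edge would be uncovered because $S \subseteq V_1$) forces $C \cap V_1 \subseteq S$ and hence $|C \cap V_1| \leq k-1$.

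To finish, set $T = V_1 \setminus C$, so that $|T| \geq (2k-1) - (k-1) = k$. For any $t \in T$ and $r \in R$, the edge $tr$ cannot be red, because otherwise $t$ would be pulled into the red component of $r$, namely $C$, contradicting $t \in T$. Hence every edge between $T$ and $R$ is blue, producing a blue complete bipartite subgraph with both parts of size at least $k$; any $k$-matching inside it lies in a single blue component and gives the promised blue $k$-connected matching, contradicting the assumption. The only mildly delicate step is the K\"onig-plus-connectedness observation yielding $|C \cap V_1| \leq k-1$, which is the quantitative ingredient that drives the whole argument and will be reused repeatedly in the proof of Theorem~\ref{theo3}.
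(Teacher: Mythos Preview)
Your argument has a gap at the step ``no such cover can be contained in $V_2$, so some minimum vertex cover $S$ of $C$ lies in $V_1$.'' Knowing that no minimum cover lies entirely in $V_2$ does not force one to lie entirely in $V_1$; minimum covers may straddle both sides. For a concrete obstruction take $k\ge 3$ and let the red component $C$ be the double star with $u\in V_1$ joined in red to $w_1,\ldots,w_k\in V_2$ and $w_1$ joined in red to $v_1,\ldots,v_{k-1}\in V_1$ (and no other red edges in $C$). Then $R=\{w_1,\ldots,w_k\}$ and $|C\cap V_1|=|C\cap V_2|=k$, yet the unique minimum vertex cover of $C$ is $\{u,w_1\}$, contained in neither part. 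Your conclusion $|C\cap V_1|\le k-1$ therefore fails, $|T|=|V_1\setminus C|=k-1$, and the blue complete bipartite graph on $T\times R$ yields only a $(k-1)$-matching; the argument does not close. (The lemma is of course still true in this configuration, but the blue $k$-matching one actually needs uses vertices of $C\cap V_1$, which your set-up discards.)

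The paper proceeds differently, by induction on $k$: assuming $r(k,k)=2k-1$, one supposes a bad coloring of $K_{2k+1,2k+1}$ with no monochromatic $(k+1)$-connected matching, notes that each color has at most two components of matching number exactly $k$, and through a case analysis removes two vertices from each side so that every such component drops below matching number $k$. This leaves a $K_{2k-1,2k-1}$ with no monochromatic $k$-connected matching, contradicting the inductive hypothesis.
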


  \begin{proof}
    By Fact \ref{fact2.2}, $r(k,k) \geq 2k-1$, so we need to show that $r(k,k) \leq 2k-1$.

    Use induction on $k$. The assertion holds for $k=2$ by Fact \ref{fact2.3}. Assume that $k \geq 2$ and the assertion holds for $k$. If $r(k+1,k+1) \neq 2k+1$, then there exist a 2-edge-coloring of $K_{2k+1,2k+1}$ such that there is no monochromatic $(k+1)$-connected matching. Note that it contains at most two red $k$-connected matchings $\{R_{1}, R_{2}\}$ and at most two blue $k$-connected matchings $\{B_{1}, B_{2}\}$. Clearly, the size of a maximum matching in  these monochromatic $k$-connected matchings is exactly $k$.

    Case 1: There are exactly two red $k$-connected matchings and two blue $k$-connected matchings.

    By Remark \ref{remark2.2}, each $k$-connected matching not in $V_{i}$ contains at least $k+1$ vertices in $V_{i}$ and $|V_{i}|=2k+1$, then $V_{i}$ contains at least one red $k$-connected matching and one blue $k$-connected matching $(i=1, 2)$.

    Subcase 1.1: Four monochromatic $k$-connected matchings are in $V_{1}$, then it is easy to show that $R_{1}\cap B_{1}\cap V_{1}\neq \emptyset, R_{2}\cap B_{2}\cap V_{1}\neq \emptyset$ (interchange the subscripts of $B_{i}'s$ if necessary). By the assumption, every vertex in $R_{i} \cap V_{1}(B_{j}\cap V_{1})$ is a red (blue) cover vertex of $R_{i}(B_{j})$. Then remove one red-blue cover vertex from $R_{1}\cap B_{1}\cap V_{1}$ and $R_{2}\cap B_{2}\cap V_{1}$ respectively and two vertices in $V_{2}$. Then the cardinality of a minimum vertex cover of each $R_{i},B_{j}$ is reduced by one. By K\"{o}nig's theorem, after the removal of these four vertices, there is neither red $k$-connected matching nor blue $k$-connected matching in the remaining $K_{2k-1,2k-1}$. A contradiction to our induction hypothesis that $r(k,k)=2k-1$.

    Subcase 1.2: Three monochromatic $k$-connected matchings are in $V_{1}$ and another is not in $V_{1}$.

    W.L.O.G, suppose two red $k$-connected matchings $\{R_{1}, R_{2}\}$ and one blue $k$-connected matching $B_{1}$ are in $V_{1}$. Again, every vertex of these $k$-connected matchings in $V_{1}$ is a cover vertex of these $k$-connected matchings. Note that $B_{1}$ must intersect with at least one red $k$-connected matching (assume that is $R_{1}$) in $V_{1}$. Remove one cover vertex from $R_{1}\cap B_{1}\cap V_{1}$, one cover vertex of $R_{2}\cap V_{1}$, and remove one cover vertex from $B_{2}\cap V_{2}$, one of any other vertices in $V_{2}$. Then there is neither red $k$-connected matching nor blue $k$-connected matching in the remaining $K_{2k-1,2k-1}$. A contradiction again.

    Subcase 1.3: Two $k$-connected matchings are in $V_{1}$ and others are not in $V_{1}$, then remove one cover vertex from two $k$-connected matchings in $V_{1}$ respectively and one cover vertex from other two $k$-connected matchings in $V_{2}$ respectively. A contradiction again.

    Case 2: There are two red $k$-connected matchings and one blue $k$-connected matching. (The argument is the same if there are one red $k$-connected matching and two blue $k$-connected matchings.) In this case, by Remark \ref{remark2.2} and $|V_{i}|=2k+1$ $(i=1, 2)$, at least one red $k$-connected matching is in $V_{1}$ and at least one red $k$-connected matching is in $V_{2}$.

    Subcase 2.1: All of them are in $V_{1}$. It is similar to subcase 1.2 which we have discussed.

    Subcase 2.2: One red $k$-connected matching is in $V_{1}$ (say $R_{1}$) and at least one of the other two monochromatic $k$-connected matchings (say $R_{2}$) has at least 1 cover vertex in $V_{2}$. Then remove one cover vertex from $R_{1}\cap V_{1}$ and one cover vertex from $R_{2}\cap V_{2}$. Remove one cover vertex from the blue $k$-connected matching and one more vertex in $V_{1}$ or $V_{2}$ such that two vertices have been removed from each of $V_{i}$ ($i=1,2$). A contradiction again.

    Case 3: There are two monochromatic $k$-connected matchings.
    Remove one cover vertex from each of these two monochromatic $k$-connected matchings. Remove other vertices from $V_{i}$ ($i=1,2$) until two vertices have been removed from each of $V_{i}$ ($i=1,2$). A contradiction again.

    Case 4: There is one monochromatic $k$-connected matching.
    Remove one cover vertex from this monochromatic $k$-connected matching. Remove other vertices from $V_{i}$ ($i=1,2$) until two vertices have been removed from each of $V_{i}$ ($i=1,2$). A contradiction again.

    There is at least one monochromatic $k$-connected matching, so we have discussed all possible cases and have shown that $r(k, k) \leq 2k-1$. \q
  \end{proof}

  \begin{lemma}\label{lem2.5}
    $r(k,k+1)=2k$ for $k\geq 2$.
  \end{lemma}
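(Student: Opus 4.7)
The lower bound $r(k,k+1)\ge 2k$ follows from Fact~\ref{fact2.2}, so I would focus on the upper bound $r(k,k+1)\le 2k$ by contradiction: assume some $2$-edge-coloring of $K_{2k,2k}$ has no red $k$-connected matching and no blue $(k+1)$-connected matching. Then every blue component has matching at most $k$, and by Remark~\ref{remark2.2} any blue $k$-connected matching uses at least $k$ vertices on each side. Since $|V_1|=|V_2|=2k$, at most two blue $k$-connected matchings can coexist, which naturally gives three cases (two, one, or zero).

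\noindent\textbf{Two blue $k$-connected matchings $B_1,B_2$.} A vertex count forces $|B_j\cap V_i|=k$ for all $i,j\in\{1,2\}$, so $V_i=(B_1\cap V_i)\cup(B_2\cap V_i)$. Because $B_1$ and $B_2$ lie in distinct blue components, every edge between $B_1\cap V_1$ and $B_2\cap V_2$ must be red. These edges form a connected red $K_{k,k}$, which contains a matching of size $k$; hence some red component has matching at least $k$, giving a red $k$-connected matching and a contradiction.

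\noindent\textbf{At most one blue $k$-connected matching.} If a blue $k$-connected matching $B_1$ exists, I would pick a vertex $v$ in a minimum vertex cover $C$ of $B_1$ (with $|C|=k$ by K\"onig's theorem) and delete $v$; if none exists, I would delete any vertex of $V_1$. In the resulting $K_{2k-1,2k}$, the set $C\setminus\{v\}$ still covers every edge of $B_1\setminus v$, so each blue component has matching at most $k-1$; red components only shrink, so they too have matching at most $k-1$. Removing one more vertex from $V_2$ gives a $K_{2k-1,2k-1}$ with no monochromatic $k$-connected matching, contradicting Lemma~\ref{lem2.4}.

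The crux I expect to be the two-matching case. The temptation is to try to enlarge $B_1$ or $B_2$ into a $(k+1)$-connected matching, but the clean observation is that the edges forced to be red between opposite sides of distinct blue components already form a red $K_{k,k}$. Once this is in hand, the remaining cases reduce to $r(k,k)=2k-1$ via a single cover-vertex deletion, paralleling the mechanism used in Lemma~\ref{lem2.4}.
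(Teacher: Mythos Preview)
Your proof is correct. The one-blue-matching case matches the paper's argument exactly, but in the two-blue-matching case you take a genuinely different route. The paper handles that case by the same deletion trick used elsewhere: since each $B_j$ has exactly $k$ vertices on each side, every vertex of $B_j$ is a cover vertex, so one removes a cover vertex of $B_1$ from $V_1$ and a cover vertex of $B_2$ from $V_2$, leaving a $K_{2k-1,2k-1}$ with no monochromatic $k$-connected matching and then invokes Lemma~\ref{lem2.4}. Your argument instead observes directly that all edges between $B_1\cap V_1$ and $B_2\cap V_2$ are forced red, producing a red $K_{k,k}$ and an immediate contradiction without appealing to Lemma~\ref{lem2.4}. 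Your approach is shorter and more self-contained for this case; the paper's approach has the virtue of uniformity, using the same K\"onig-based reduction throughout and mirroring the inductive mechanism that drives the rest of Section~3.
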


  \begin{proof}
    By Fact \ref{fact2.2}, $r(k,k+1) \geq 2k$, so we need to show that $r(k,k+1) \leq 2k$. If $r(k,k+1) \neq 2k$, then there exist a 2-edge-coloring of $K_{2k,2k}$ such that there is no red $k$-connected matching and at most two blue $k$-connected matchings. Clearly, the size of a maximum matching in these monochromatic $k$-connected matchings is exactly $k$.

    Case 1: There are two blue $k$-connected matchings. Then each of two blue $k$-connected matching must have $k$ vertices in each part of $K_{2k,2k}$. Note that each vertex of a blue $k$-connected matching is a cover vertex. Then remove one cover vertex of a blue $k$-connected matching in $V_{1}$ and one cover vertex of another blue $k$-connected matching in $V_{2}$. By K\"onig's theorem there is neither red $k$-connected matching nor blue $k$-connected matching in the remaining $K_{2k-1,2k-1}$. A contradiction to Lemma \ref{lem2.4}.

    Case 2: There is one blue $k$-connected matching (say $B$). Then remove one cover vertex from $B\cap V_{i}$ $(i=1$ or $2)$ and any other vertex from $V_{3-i}$. A contradiction again.\q
  \end{proof}

  \begin{theo}\label{theo2.1}
    $r(k,l)=k+l-1$ for $l\geq k\geq 2$.
  \end{theo}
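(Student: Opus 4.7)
The lower bound $r(k,l) \geq k+l-1$ is immediate from Fact~\ref{fact2.2}. For the upper bound, my plan is to induct on $l$ with $k$ fixed, taking Lemma~\ref{lem2.4} (the case $l=k$) and Lemma~\ref{lem2.5} (the case $l=k+1$) as the base cases. Assuming $l \geq k+2$ and $r(k,l-1) = k+l-2$, I would suppose for contradiction that some 2-edge-coloring of $K_{k+l-1, k+l-1}$ on parts $V_1 \cup V_2$ has no red $k$-connected matching and no blue $l$-connected matching. The idea is to delete one vertex from each part so as to obtain a subgraph $K_{k+l-2,k+l-2}$ to which the inductive hypothesis yields a contradiction.

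The key intermediate observation is that, under the gap $l \geq k+2$, the coloring contains at most one blue component of matching number at least $l-1$: two such (necessarily disjoint) components $B_1, B_2$ would satisfy $|B_1 \cap V_1| + |B_2 \cap V_1| \geq 2(l-1)$ by Remark~\ref{remark2.2}, exceeding $|V_1| = k+l-1$. Any such component $B$, if it exists, must have matching number exactly $l-1$ (since no blue $l$-connected matching exists), so by Theorem~\ref{theo1} it admits a minimum vertex cover $C$ with $|C| = l-1$.

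I would then delete a vertex $v$ and a vertex $u$ from opposite parts, chosen as follows: if the unique large blue component $B$ exists, take $v$ to be any member of $C$ (swapping the roles of $V_1$ and $V_2$ if necessary so that $v \in V_1$) and let $u \in V_2$ be arbitrary; otherwise let both $v, u$ be arbitrary. In the resulting $K_{k+l-2,k+l-2}$, all red components remain of matching number at most $k-1$, and every blue component has matching number at most $l-2$: the remnants of $B$ are covered by $C \setminus \{v\}$, which has size $l-2$, while every other blue component already had matching number at most $l-2$ in the original coloring and vertex deletion can only decrease this. Thus the subgraph has neither a red $k$-connected matching nor a blue $(l-1)$-connected matching, contradicting $r(k,l-1) = k+l-2$. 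The main obstacle is ensuring uniqueness of the large blue component — had two such components been possible, a single cover-vertex deletion would not suffice to destroy all blue $(l-1)$-connected matchings — which is exactly why Lemmas~\ref{lem2.4} and~\ref{lem2.5} (where $l \leq k+1$) must be handled by the separate direct case analyses given above.
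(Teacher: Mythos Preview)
Your proposal is correct and follows essentially the same approach as the paper's own proof: induct on $l$ with Lemmas~\ref{lem2.4} and~\ref{lem2.5} as base cases, use the gap $l\geq k+2$ to force uniqueness of the large blue component, delete one cover vertex from it together with an arbitrary vertex on the other side, and apply the inductive hypothesis via K\"onig's theorem. Your write-up is in fact slightly more explicit than the paper's, which asserts the uniqueness of the blue $l$-connected matching without spelling out the counting argument $2(l-1)>k+l-1$.
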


  \begin{proof}
    By Fact \ref{fact2.2}, $r(k,l)\geq k+l-1$, so we need to show that $r(k,l)\leq k+l-1$.

    Use induction on $l$. The assertion holds for $l=k$ by Lemma \ref{lem2.4} and for $l=k+1$ by Lemma \ref{lem2.5}. Assume that $l\geq k+1$ and the assertion holds for $l$. If $r(k,l+1) \neq k+l$, then there exist a 2-edge-coloring of $K_{k+l,k+l}$ such that there is no red $k$-connected matching and at most one blue $l$-connected matching. Clearly, the size of a maximum matching in the blue $k$-connected matching is exactly $l$.
    
    Remove one cover vertex from this blue $l$-connected matching in $V_{i}$ $(i=1,2)$ and remove any other vertex in $V_{3-i}$. Then by K\"onig's theorem, there is neither red $k$-connected matching nor blue $l$-connected matching in the remaining $K_{k+l-1,k+l-1}$. A contradiction to our induction hypothesis. So $r(k,l+1) \leq k+l$ when $l\geq k+1$. The proof is complete. \q
  \end{proof}

  \section{Exact value of r(k,l,m)}

  In this section, we will determine the exact value of $r(k,l,m)$. The exact value of $r(k,k,k)$ and $r(k,l,l)$ have been determined in \cite{BLS}. So we assume that $m >l >k\geq 2$ in this section.

  \begin{lemma}\label{lemma5.2}
    $r(k,l,m)\geq k+2m-i-2$ for $0\leq i\leq k-2, 3\leq k< l< m\leq \frac{k+i-1}{2}+l$.
  \end{lemma}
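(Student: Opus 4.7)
The plan is to establish this lower bound by exhibiting an explicit $3$-edge-coloring of $K_{n,n}$, where $n = k+2m-i-3$, that contains no red $k$-connected matching, no blue $l$-connected matching, and no green $m$-connected matching. By Theorem~\ref{theo1} (K\"onig's theorem), it suffices to produce a coloring in which every red (resp.\ blue, green) component has minimum vertex cover at most $k-1$ (resp.\ $l-1$, $m-1$); Definition~\ref{def1.1} will then immediately yield $r(k,l,m) > n$.

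I would start from the Fact~\ref{fact2.2} construction (which only realizes the weaker bound $n \geq k+l+m-3$) and augment it to absorb the extra $m-l-i$ vertices by splitting the green part into two disjoint components of bounded vertex cover. Concretely, partition $V_1 = L \cup G_1 \cup G_2$ with $|L| = k-1$ and $|G_1|,|G_2|$ summing to $2m-i-2$, and partition $V_2 = L' \cup G_1' \cup G_2'$ analogously. The colors are assigned blockwise: all edges incident to $L$ (the block $L \times V_2$) are red, giving a single red component with vertex cover $|L| = k-1$; the two ``diagonal'' blocks $G_1 \times G_1'$ and $G_2 \times G_2'$ are green, producing two disjoint green components, each with vertex cover at most $m-1$; and the remaining edges---the off-diagonal cross blocks $G_1 \times G_2'$ and $G_2 \times G_1'$, together with the edges from $G_1 \cup G_2$ to $L'$---are assigned to blue (or, where the parameter $i$ forces it, split between blue and red) so that each blue component has vertex cover at most $l-1$.

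The crux of the argument is arithmetic: the sizes of $G_1, G_2, G_1', G_2'$ must be tuned so that the cross blocks can be absorbed into the blue subgraph without merging the two green components or exceeding the red budget. A direct computation shows that this system of inequalities is solvable in non-negative integers precisely under the hypothesis $m \leq l+(k+i-1)/2$, which is exactly the assumption of the lemma. Once the partition is fixed and the colors are assigned, K\"onig's theorem applied to each monochromatic component yields the required matching bounds, and the construction is complete.

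The main obstacle will be this calibration of the off-diagonal cross blocks: one must assign their colors so that neither green component merges with the other (which would blow the green vertex cover up to $2m-2$) nor any blue component exceed its budget of $l-1$. The hypothesis $m \leq l+(k+i-1)/2$ is precisely the inequality that renders this balance feasible, and at the extreme $i = k-2$ the bound degenerates into the weaker Fact~\ref{fact2.2} estimate, consistent with the case analysis needed for the main Theorem~\ref{theo3}.
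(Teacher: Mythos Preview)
Your high-level strategy---an explicit block coloring of $K_{n,n}$ with $n=k+2m-i-3$, two disjoint green components of cover at most $m-1$, and K\"onig's theorem to certify the matching bounds---is exactly the paper's approach. But the concrete partition you propose has a genuine gap.

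With $|L|=k-1$ and all of $L\times V_2$ red, the red component already has vertex cover exactly $k-1$; since this component contains \emph{every} vertex of $V_2$, any further red edge out of $G_1\cup G_2$ would merge into it and push the red cover to $k$. So your escape hatch ``split between blue and red where the parameter $i$ forces it'' is not available. Meanwhile, the residual blue subgraph $G_1\times(G_2'\cup L')\;\cup\;G_2\times(G_1'\cup L')$ is connected through $L'$ and admits a matching saturating $G_1\cup G_2$ (Hall's condition is easily checked), so its minimum cover is $|G_1|+|G_2|=2m-i-2>l-1$. No choice of the sizes $|G_j|,|G_j'|,|L'|$ within a $3+3$ block scheme with $L\times V_2$ entirely red avoids this; the claimed ``direct computation'' does not go through.

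The paper fixes this by breaking the symmetry: it partitions $V_1$ into three parts $S_1,S_2,S_3$ of sizes $m-1,\,m-1,\,k-1-i$ (note the $-i$), but partitions $V_2$ into \emph{four} parts $T_1,T_2,T_3,T_4$ of sizes $l-1,\,l-1,\,k-1,\,2(m-l)-i$. Crucially, red is not a single block $S_3\times V_2$ but is distributed over three disjoint components $S_1\times T_4$, $S_2\times T_3$, $S_3\times(T_1\cup T_2)$, each of cover at most $k-1$. Blue then splits into three components as well, each with cover $\leq l-1$ (two of them have a $T_j$ side of size $l-1$), and green sits on $S_1\times(T_1\cup T_3)$ and $S_2\times(T_2\cup T_4)$ with cover $\leq m-1$ via the $S$-side. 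The constraint $2(m-l)-i\le k-1$, equivalent to the hypothesis $m\le l+(k+i-1)/2$, is exactly what keeps the $S_1\times T_4$ red block within budget. So the arithmetic you allude to really does pin down the lemma's range---but it requires the extra $T_4$ block and three red components, not one.
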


  \begin{proof}
    Let $k,l,m,i$ satisfy the conditions. We show that there exist a 3-edge-coloring of $K_{k+2m-i-3,k+2m-i-3}$ such that there is no monochromatic $k$-connected matching, no monochromatic $l$-connected matching and no monochromatic $m$-connected matching. Then we have $r(k,l,m)\geq k+2m-i-2$.

    In $K_{k+2m-i-3,k+2m-i-3}$, partition $V_{1}$ into 3 sets: $S_{1}, S_{2}, S_{3}$ with $\mid S_{1}\mid=m-1$, $\mid S_{2}\mid=m-1$, $\mid S_{3}\mid=k-1-i$. Partition $V_{2}$ into 4 sets: $T_{1}, T_{2}, T_{3}, T_{4}$. If $2(m-l)-i> 0$, then let $\mid T_{1}\mid=l-1$, $\mid T_{2}\mid=l-1$, $\mid T_{3}\mid=k-1$, $\mid T_{4}\mid=2(m-l)-i$. Else let $\mid T_{1}\mid \leq l-1$, $\mid T_{2}\mid \leq l-1$, $\mid T_{3}\mid \leq k-1$, $\mid T_{4}\mid= 0$ ($k+2l-3\geq k+2m-i-3$ since $2(m-l)-i\leq 0$). It is easy to see that $2(m-l)-i\leq k-1$ since $l< m\leq \frac{k+i-1}{2}+l$. Let C: $(i,j)$ represent that all of the edges between $S_{i}$ and $T_{j}$ are colored C. We colour $K_{k+2m-i-3,k+2m-i-3}$ as follows:

    Red: $(1,4), (2,3), (3,1), (3,2)$;

    Blue: $(1,2), (2,1), (3,3), (3,4)$;

    Green: $(1,1), (1,3), (2,2), (2,4)$.

    It is easy to verify that there is no red $k$-connected matching, no blue $l$-connected matching and no green $m$-connected matching in this 3-coloring. This shows that\\ $r(k,l,m)\geq k+2m-i-2$.\q
  \end{proof}

  We will show that the lower bound is also an upper bound.

  \begin{lemma}\label{lemma5.1}
    $r(k,l,l+1)\leq k+2l$ for $l> k\geq 2$.
  \end{lemma}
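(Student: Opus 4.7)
The plan is to argue by contradiction: suppose $K_{k+2l,k+2l}$ admits a 3-edge-coloring with no red $k$-connected matching, no blue $l$-connected matching, and no green $(l+1)$-connected matching. Under this hypothesis every green component has maximum matching at most $l$, so I shall call a green component a \emph{green $l$-CM} if its maximum matching equals exactly $l$.

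The first step is to bound the number of green $l$-CMs. By Remark \ref{remark2.2} each such component contains at least $l$ vertices in each of $V_1$ and $V_2$, and since $|V_1|=k+2l<3l$ (because $k<l$), at most two of them can appear. I then split into three cases by this count. If there are zero green $l$-CMs, the assumed coloring of $K_{k+2l,k+2l}$ contains no red $k$-CM, no blue $l$-CM, and no green $l$-CM, contradicting Theorem \ref{theo1.1} which gives $r(k,l,l)=k+2l-2$ whenever $k\leq l$.

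If there is exactly one green $l$-CM $G_1$, I pick a vertex $v_1$ from a minimum vertex cover of $G_1$ and an arbitrary vertex $u_1$ from the opposite part, then delete both to obtain a balanced $K_{k+2l-1,k+2l-1}$. By K\"onig's theorem the maximum matching of $G_1-v_1$ drops to at most $l-1$, so every green component of the residual graph has maximum matching at most $l-1$, while the red and blue maximum matchings can only decrease under vertex deletion. Hence the residual coloring has no red $k$-CM, no blue $l$-CM, and no green $l$-CM, contradicting Theorem \ref{theo1.1} once more. If there are two green $l$-CMs $G_1,G_2$, I pick cover vertices $v_1\in G_1$ and $v_2\in G_2$. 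When $v_1$ and $v_2$ lie on the same side of the bipartition I additionally delete two arbitrary vertices from the opposite side to reach $K_{k+2l-2,k+2l-2}$; when they lie on opposite sides I delete just $v_1$ and $v_2$ to reach $K_{k+2l-1,k+2l-1}$. The same K\"onig/monotonicity argument shows the residual coloring has no red $k$-CM, no blue $l$-CM, and no green $l$-CM, again contradicting Theorem \ref{theo1.1}.

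The technical point I expect to be careful about is that after deleting a cover vertex $v_i$ the component $G_i-v_i$ may fragment into several pieces; however, each piece inherits a vertex cover of size at most $l-1$ obtained from the original minimum cover by removing $v_i$, so by K\"onig no piece is a green $l$-CM. Once this monotonicity is secured, the whole proof is driven by the identity $r(k,l,l)=k+2l-2$ from Theorem \ref{theo1.1} and by the simple counting bound $|V_1|<3l$.
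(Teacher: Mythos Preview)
Your proof is correct and follows essentially the same approach as the paper: assume a bad coloring of $K_{k+2l,k+2l}$, observe there are at most two green components with maximum matching exactly $l$, delete a cover vertex from each such component (plus balancing vertices) and invoke $r(k,l,l)=k+2l-2$ from Theorem~\ref{theo1.1}. The only cosmetic differences are that you explicitly separate the zero-component case and the same-side/opposite-side subcases, and in some branches you stop at $K_{k+2l-1,k+2l-1}$ rather than going all the way down to $K_{k+2l-2,k+2l-2}$; both variants work for the same reason.
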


  \begin{proof}
    By Theorem \ref{theo1.1}, we have $r(k,l,l)=k+2l-2$ for $l> k$.
    If $r(k,l,l+1)\neq k+2l$, then there exist a 3-edge-coloring of $K_{k+2l,k+2l}$ such that there is no red $k$-connected matching, no blue $l$-connected matching and at most two green $l$-connected matchings. Clearly, the size of maximum matching in these green $l$-connected matchings is exactly $l$.

    If there are two green $l$-connected matchings, then remove one cover vertex from each of these two green $l$-connected matchings and remove other vertices from $V_{i}$ ($i=1,2$) until two vertices have been removed from each of $V_{i}$ ($i=1,2$). Then there is no red $k$-connected matching, no blue $l$-connected matching and no green $l$-connected matching in the remaining $K_{k+2l-2, k+2l-2}$. A contradiction to $r(k,l,l)=k+2l-2$.

    If there is one green $l$-connected matching, then remove one cover vertex from this green $l$-connected matching and remove other vertices from $V_{i}$ until two vertices have been removed from each of $V_{i}$ $(i=1,2)$. A contradiction again.

   So we have shown that $r(k,l,l+1)\leq k+2l$ for $l> k\geq 2$.\q
  \end{proof}

  \begin{theo}\label{theo5.1}
    $r(k,l,m)=k+2m-2$ for $3\leq k< l< m\leq l+\frac{k-1}{2}$.
  \end{theo}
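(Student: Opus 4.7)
\noindent\textbf{Proof plan for Theorem \ref{theo5.1}.}

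The lower bound $r(k,l,m) \geq k+2m-2$ is immediate from Lemma \ref{lemma5.2} applied with $i=0$. For the upper bound, my plan is to induct on $m$ within the admissible range $l+1 \leq m \leq l+\frac{k-1}{2}$. The base case $m = l+1$ is exactly Lemma \ref{lemma5.1}, which gives $r(k,l,l+1) \leq k+2l = k+2(l+1)-2$, so only the inductive step remains.

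For the inductive step I would assume $m \geq l+2$ together with $r(k,l,m-1) \leq k+2m-4$, and suppose for contradiction that a $3$-edge-coloring of $K_{k+2m-2,\,k+2m-2}$ admits no red $k$-connected matching, no blue $l$-connected matching, and no green $m$-connected matching. Then the maximum matching in every green component is at most $m-1$; I shall call a green component \emph{large} if this matching number equals $m-1$, i.e.\ if it is a green $(m-1)$-connected matching. By Remark \ref{remark2.2}, a large green component contributes at least $m-1$ vertices to each side of the bipartition. Since $m \geq l+2$ and $k \leq l-1$ force $m-1 > k$, having three large components would demand $|V_i| \geq 3(m-1) > k+2m-2$, so there are at most two of them.

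The core step is then to delete exactly four vertices, two from each side, so that every large green component loses one of its minimum vertex cover vertices. By Theorem \ref{theo1} (K\"onig), removing a cover vertex of a bipartite component strictly decreases its maximum matching, and vertex deletion can otherwise only shrink components or split them into pieces of no larger matching number. Consequently the resulting $K_{k+2m-4,\,k+2m-4}$ contains no green $(m-1)$-connected matching, and also retains the absence of any red $k$- or blue $l$-connected matching. This contradicts the induction hypothesis $r(k,l,m-1) = k+2m-4$.

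Arranging the four deletions is routine: by Definition \ref{def2.2} and Remark \ref{remark2.1}, whenever a large component is in $V_i$ every vertex of its intersection with $V_i$ is a cover vertex, so the needed cover vertex is always available on whichever side is convenient, and a case split on whether each large component has its minimum cover in $V_1$, in $V_2$, or in both, lets us distribute the cover-vertex deletions so that the remaining one or three arbitrary deletions can bring the tally to two per side. The main obstacle, and the reason one cannot simply start the induction at $m = l$, is obtaining and exploiting the bound of at most two large green components: this bound depends on the inequality $m-1 > k$ and on the margin of only $k$ extra vertices in each part, and the whole removal budget of four vertices is exactly calibrated to this count.
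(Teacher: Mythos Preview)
Your proposal is correct and follows essentially the same approach as the paper's proof: lower bound from Lemma~\ref{lemma5.2} with $i=0$, upper bound by induction on $m$ with base case Lemma~\ref{lemma5.1}, and the inductive step handled by removing one cover vertex from each of the (at most two) large green components plus balancing deletions to drop to $K_{k+2m-4,\,k+2m-4}$ and invoke the hypothesis. The only cosmetic difference is that the paper phrases the induction as $m\to m+1$ and explicitly records the slightly stronger upper bound $r(k,l,m)\le k+2m-2$ for all $2\le k<l<m$ (the constraint $m\le l+\tfrac{k-1}{2}$ is used only for the lower bound), whereas you induct within the stated range; your argument nowhere uses that upper constraint either, so it in fact yields the same stronger statement.
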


  \begin{proof}
    Let $i=0$ in Lemma \ref{lemma5.2}, then we have $r(k,l,m)\geq k+2m-2$ when $3\leq k< l< m\leq l+ \frac{k-1}{2}$. So what left is to prove that $r(k,l,m)\leq k+2m-2$ for $3\leq k< l< m\leq l+ \frac{k-1}{2}$. In fact, we will prove a slightly stronger result: $r(k,l,m)\leq k+2m-2$ for $2\leq k< l< m$.

    Use induction on $m$. The assertion holds for $m=l+1$ by Lemma \ref{lemma5.1}. Assume $m\geq l+1$ and the assertion holds for $m$. If $r(k,l,m+1)\neq k+2m$, then there exist a 3-edge-coloring of $K_{k+2m,k+2m}$ such that there is no red $k$-connected matching, no blue $l$-connected matching and at most two green $m$-connected matchings. Clearly, the size of maximum matching in these green $m$-connected matchings is exactly $m$.

    If there are two green $m$-connected matchings, then remove one cover vertex from each of these two green $m$-connected matchings and remove other vertices from $V_{i}$ ($i=1,2$) until two vertices have been removed from each of $V_{i}$ ($i=1,2$). Now there is no red $k$-connected matching, no blue $l$-connected matching and no green $m$-connected matching in the remaining $K_{k+2m-2, k+2m-2}$. A contradiction to our induction hypothesis.

    If there is one green $m$-connected matching, then remove one cover vertex from this green $m$-connected matching and remove other vertices from $V_{i}$ ($i=1,2$) until two vertices have been removed from each of $V_{i}$ ($i=1,2$). A contradiction again.

    So we have shown that $r(k,l,m)\leq k+2m-2$ for $2\leq k< l< m$.\q
  \end{proof}

  \begin{lemma}\label{lemma5.3}
    $r(k,l,k+l-1)=2k+2l-3$ for $l> k\geq 2$.
  \end{lemma}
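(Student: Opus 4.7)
For the lower bound, Fact~\ref{fact2.2} applied with $k_1=k$, $k_2=l$, $k_3=k+l-1$ gives directly $r(k,l,k+l-1)\ge k+l+(k+l-1)-2=2k+2l-3$.

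For the upper bound, my plan is induction on $l$ with $k$ fixed; the inductive hypothesis is $r(k,l-1,k+l-2)\le 2k+2l-5$, valid whenever $l\ge k+2$ (the base case $l=k+1$ is handled separately below). Suppose for contradiction that a 3-edge-coloring of $K_{N,N}$ with $N=2k+2l-3$ has no red $k$-connected matching, no blue $l$-connected matching and no green $(k+l-1)$-connected matching. Mimicking the vertex-removal template of Theorem~\ref{theo5.1} and Lemma~\ref{lemma5.1}, I plan to delete two vertices from each side so that the resulting $K_{2k+2l-5,2k+2l-5}$ contains no red $k$-, no blue $(l-1)$-, and no green $(k+l-2)$-connected matching, contradicting the inductive hypothesis. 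Absence of a red $k$-connected matching is automatic under deletion, so the four chosen vertices must be cover vertices that collectively destroy every pre-existing blue $(l-1)$-connected matching and every pre-existing green $(k+l-2)$-connected matching. Remark~\ref{remark2.2} together with a vertex-count on $V_1$ (and on $V_2$) bounds the number of blue $(l-1)$-connected matchings by $\lfloor N/(l-1)\rfloor\le 3$ and the number of green $(k+l-2)$-connected matchings by $\lfloor N/(k+l-2)\rfloor\le 2$.

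The main obstacle is that in the extremal situation up to five heavy matchings (three blue and two green) must be destroyed using only four deletions, which is possible only if at least one deleted vertex is a cover vertex of a blue heavy matching and of a green heavy matching simultaneously. To secure such an overlap vertex, I plan the following counting argument: three disjoint blue $(l-1)$-connected matchings occupy at least $3(l-1)$ vertices of $V_1$ and two disjoint green $(k+l-2)$-connected matchings occupy at least $2(k+l-2)$ vertices of $V_1$, while $|V_1|=2k+2l-3$, so inclusion-exclusion forces at least $3l-4\ge 5$ vertices of $V_1$ to lie simultaneously in a blue heavy and a green heavy matching. A case analysis on whether each heavy matching is in $V_1$, in $V_2$, or in both (Definition~\ref{def2.2} and Definition~\ref{def2.3}), paralleling the subcase structure in the proof of Lemma~\ref{lem2.4}, then promotes such an overlap vertex to a simultaneous blue and green cover vertex. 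The base case $l=k+1$ is not implied by Theorem~\ref{theo5.1} (which only yields $5k-2$), and I would treat it by the same removal technique, now using the value $r(k,k+1,k+1)=3k$ from Theorem~\ref{theo1.1} as the target bound to contradict.
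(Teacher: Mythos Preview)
Your lower bound is fine and matches the paper. Your upper-bound strategy, however, is \emph{different} from the paper's and, as written, has a genuine gap.

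\textbf{What the paper does.} The paper does not induct on $l$. It removes $k-1$ vertices from each side (not two) and reduces directly to $r(k,l,l)=k+2l-2$ from Theorem~\ref{theo1.1}. The key advantage is that the blue threshold stays at $l$ throughout, so the only components that need to be cut down are the green ones whose matching number lies in $[l,k+l-2]$; there are at most three of these, and each must lose at most $k-1$ from its cover. The paper's case split (one, two, or three such green components) never has to find a vertex that is simultaneously a cover vertex of two different-coloured components.

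\textbf{Where your plan is incomplete.} Your inductive step forces you to lower \emph{both} the blue threshold (from $l$ to $l-1$) and the green threshold (from $k+l-1$ to $k+l-2$) with only four deletions. In the extremal configuration (three blue $(l-1)$-matchings and two green $(k+l-2)$-matchings, which does occur when $k+2\le l\le 2k$) you correctly observe that a shared vertex is needed. Your inclusion--exclusion produces many vertices of $V_1$ lying in a blue heavy component and a green heavy component simultaneously, but this is not what you need: you need a vertex that is a \emph{cover} vertex of both. Remark~\ref{remark2.1} only promotes an arbitrary vertex of $C\cap V_1$ to a cover vertex when $C$ is ``in $V_1$'', and nothing in your count forces the relevant blue and green components to be in $V_1$. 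The phrase ``a case analysis \ldots paralleling the subcase structure in the proof of Lemma~\ref{lem2.4}'' is where all the work lies, and it is substantially harder here: in Lemma~\ref{lem2.4} there are four heavy matchings of two colours and four deletions, whereas you have five heavy matchings of two colours and four deletions, and you also need the deletions to be balanced two-per-side. You have not carried this out, and it is not clear that it closes in every subcase.

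\textbf{Base case.} Your base case $l=k+1$ amounts to removing $k-1$ vertices per side to reach $r(k,k+1,k+1)=3k$; that is precisely the paper's method specialised to $l=k+1$. If you are willing to do this, you may as well run the paper's argument for all $l>k$: keep the blue threshold fixed, remove $k-1$ per side, and deal only with green. This avoids the two-colour cover-overlap problem entirely.
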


  \begin{proof}
    By Fact \ref{fact2.2}, $r(k,l,k+l-1)\geq 2k+2l-3$, so we need to show that $r(k,l,k+l-1)\leq 2k+2l-3$. If $r(k,l,k+l-1)\neq 2k+2l-3$, then there exist a 3-edge-coloring of $K_{2k+2l-3,2k+2l-3}$ such that there is no red $k$-connected matching, no blue $l$-connected matching and no green $(k+l-1)$-connected matching. We will show that it contradicts to $r(k,l,l)=k+2l-2$ guaranteed by Theorem \ref{theo1.1} when $l> k\geq 2$.

    Note that there are at most three green components which contain matchings with size in $[l,k+l-2]$ because $4l> 2k+2l-3$.

    Case 1: There is one green component whose size of a maximum matching is $i$ for some $i\in [l,k+l-2]$. We can remove $k-1$ cover vertices in $G$ and remove other vertices in $V_{1}\cup V_{2}$ such that $k-1$ vertices have been removed from each of $V_{i}$ $(i=1,2)$. Now, $G$ is at most a $(l-1)$-connected matching because $i-(k-1)\leq (k+l-2)-(k-1)=l-1$. So there is no red $k$-connected matching, no blue $l$-connected matching and no green $l$-connected matching in the remaining $K_{k+2l-2,k+2l-2}$. A contradiction to $r(k,l,l)=k+2l-2$.

    Case 2: There are one green component whose size of a maximum matching is $i$ (say $G_{1}$) and one green component whose size of a maximum matching is $j$ (say $G_{2}$), where $l\leq i\leq j\leq k+l-2$.

    Subcase 2.1: $G_{1}$ has at most $k+l-2$ vertices in both $V_{1}$ and $V_{2}$. By pigeonhole principle, $G_{2}$ has at least $\lceil \frac{j}{2} \rceil$ cover vertices in $V_{1}$ or $V_{2}$. W.L.O.G, assume that $G_{2}$ has at least $\lceil \frac{j}{2} \rceil$ cover vertices in $V_{1}$. Then we remove $p= \min\{\lceil \frac{j}{2} \rceil,k-1\}$ cover vertices from $G_{2}\cap V_{1}$ and $k-1-p$ other vertices from $V_{1}$, remove $k-1$ vertices from $G_{1}\cap V_{2}$. Now, what remains in $G_{2}$ is not a green $l$-connected matching because of the following: the number of cover vertices left in $G_{2}$ is $j-(k-1)\leq (k+l-2)-(k-1)=l-1$ if $p=k-1$ or if $p=\lceil \frac{j}{2}\rceil$, then the number of cover vertices left in $G_{2}$ is at most $\lfloor \frac{j}{2}\rfloor \leq \lceil \frac{j}{2}\rceil \leq k-1 \leq l-1$. And the number of remaining vertices in $G_{1}\cap V_{2}$ is at most $(k+l-2)-(k-1)=l-1$. Hence there is no red $k$-connected matching, no blue $l$-connected matching and no green $l$-connected matching in the remaining $K_{k+2l-2,k+2l-2}$. A contradiction again.

    Subcase 2.2: $G_{1}$ has at least $k+l-1$ vertices in $V_{1}$ or $V_{2}$. W.L.O.G assume that $G_{1}$ has at least $k+l-1$ vertices in $V_{1}$. So $G_{2}$ has at most $k+l-2$ vertices in $V_{1}$.

    If $G_{1}$ has at most $k+l-2$ vertices in $V_{2}$. Then remove $k-1$ vertices from $G_{2}\cap V_{1}$ and remove $k-1$ vertices from $G_{1}\cap V_{2}$. Now in the remaining $K_{k+2l-2,k+2l-2}$, the size of a maximum matching in $G_{1}$ and $G_{2}$ is at most $(k+l-2)-(k-1)=l-1$. A contradiction again.

    Else $G_{1}$ has at least $k+l-1$ vertices in $V_{2}$. Then $G_{2}$ has at most $k+l-2$ vertices in $V_{2}$. It is the same as subcase 2.1.

    Case 3: There are one green component whose size of a maximum matching is $i$ (say $G_{1}$),   one green component whose size of a maximum matching is $j$ (say $G_{2}$), and one green component whose size of a maximum matching is $t$ (say $G_{3}$), where $l\leq i\leq j\leq t\le k+l-2$.

    Note that $l\leq 2k-3$ in this case because $3l> 2k+2l-3$ if $l> 2k-3$ and note that $l\leq |G_{i}\cap V_{j}|\leq 2k-3$ $(i=1,2,3; j=1,2)$. Also note that $t\geq l$ and $2k+2l-3-t\leq 2k+l-3$, then $G_{1}\cup G_{2}$ has at most $2k+l-3$ vertices in both $V_{1}$ and $V_{2}$. 
    
    By pigeonhole principle, $G_{3}$ has at least $\lceil \frac{t}{2} \rceil$ cover vertices in $V_{1}$ or $V_{2}$. W.L.O.G,  assume that $G_{3}$ has at least $\lceil \frac{t}{2} \rceil$ cover vertices in $V_{1}$. Then remove $k-1$ vertices from $G_{3}\cap V_{1}$ such that they contain as many cover vertices $(\min \{\lceil \frac{t}{2} \rceil, k-1\})$ of $G_{3}$ as possible. Remove $k-1$ vertices from $V_{2}$ such that $G_{1}$ has at most $l-1$ vertices in $V_{2}$ and $G_{2}$ has at most $l-1$ vertices in $V_{2}$.  This is possible due to the following reason: If $G_{1}$ has $s$ vertices in $V_{2}$ (recall that $l\leq s\leq 2k-3$), then remove $s-(l-1)$ vertices from $G_{1}\cap V_{2}$ and $(k-1)-(s-l+1)$ vertices from $G_{2}\cap V_{2}$. Then $G_{2}$ has at most $(2k+l-3)-s-[(k-1)-(s-l+1)]=k-1\leq l-1$ vertices in $V_{2}$ in the remaining $K_{k+2l-2,k+2l-2}$. $G_{1}$ has at most $\max \{(2k-3)-(k-1), \lfloor \frac{t}{2}\rfloor \}\leq k-1\leq l-1$ cover vertices in the remaining $K_{k+2l-2, k+2l-2}$. Now there is no red $k$-connected matching, no blue $l$-connected matching and no green $l$-connected matching in the remaining $K_{k+2l-2,k+2l-2}$. A contradiction again.

    So we have shown that $r(k,l,k+l-1)\leq 2k+2l-3$ for $l> k\geq 2$.\q
  \end{proof}

  \begin{theo}\label{theo5.4}
    $r(k,l,m)=2k+2l-3$ for $3\leq k< l, l+\frac{k-1}{2}< m< k+l-1$.
  \end{theo}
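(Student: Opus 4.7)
The plan is to establish matching lower and upper bounds, each by invoking a lemma already proved in this section, together with a simple monotonicity observation.

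For the lower bound, I would apply Lemma~\ref{lemma5.2} with the choice $i = 2m - k - 2l + 1$. Substituting into $r(k,l,m) \geq k + 2m - i - 2$ yields exactly $2k + 2l - 3$. It remains to check the hypotheses of Lemma~\ref{lemma5.2}: the assumption $m > l + (k-1)/2$ together with the integrality of $m$ forces $i \geq 1 \geq 0$; the assumption $m < k + l - 1$ gives $m \leq k + l - 2$ and hence $i \leq k - 3 \leq k - 2$; and the remaining condition $m \leq \frac{k+i-1}{2} + l$ holds with equality, since $\frac{k+i-1}{2} + l = \frac{2m - 2l}{2} + l = m$.

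For the upper bound, I would observe that $r(k,l,m)$ is monotone non-decreasing in its third argument: if $m \leq m'$, then any component containing a matching of size at least $m'$ also contains one of size at least $m$, so $r(k,l,m) \leq r(k,l,m')$. Applying this with $m' = k + l - 1$, which is allowed since $m < k + l - 1$, and invoking Lemma~\ref{lemma5.3}, we obtain
\[
r(k,l,m) \;\leq\; r(k,l, k+l-1) \;=\; 2k + 2l - 3.
\]

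The only real task is the arithmetic verification that the choice $i = 2m - k - 2l + 1$ falls in the range $[0, k-2]$ demanded by Lemma~\ref{lemma5.2}, which is the small integrality check sketched above. All the substantive work --- the explicit three-colored construction giving the lower bound and the K\"onig-type removal argument giving the upper bound for $m = k+l-1$ --- has already been carried out in Lemmas~\ref{lemma5.2} and~\ref{lemma5.3}, so no further obstacle remains.
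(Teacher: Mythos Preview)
Your proposal is correct and follows essentially the same approach as the paper. For the upper bound both you and the paper invoke monotonicity together with Lemma~\ref{lemma5.3}; for the lower bound you explicitly set $i = 2m - k - 2l + 1$ and verify the hypotheses of Lemma~\ref{lemma5.2}, whereas the paper instead observes that the unique $i$ with $\frac{k+i-2}{2}+l < m \leq \frac{k+i-1}{2}+l$ makes $k+2m-i-2$ an integer in $(2k+2l-4,\,2k+2l-3]$ --- but this is the very same $i$, so the two arguments coincide.
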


  \begin{proof}
    Actually, by Lemma \ref{lemma5.2}, we have $r(k,l,m)\geq k+2m-i-2$ for $1\leq i\leq k-2, 3\leq k< l, \frac{k+i-2}{2}+l< m\leq \frac{k+i-1}{2}+l$. Let $k,l,m,i$ satisfy the condition. Note that $2k+2l-4< k+2m-i-2\leq 2k+2l-3$. Since $k,l,m,i$ are integers, then $k+2m-i-2=2k+2l-3$. So we have $r(k,l,m)\geq k+2m-i-2=2k+2l-3$. By Lemma \ref{lemma5.3}, we have $r(k,l,k+l-1)=2k+2l-3$. Since $m< k+l-1$, then $r(k,l,m)\leq r(k,l,k+l-1)= 2k+2l-3$.\q
  \end{proof}

  \begin{theo}\label{theo5.2}
    $r(k,l,m)=k+l+m-2$ for $2\leq k<l ,m\geq k+l-1$.
  \end{theo}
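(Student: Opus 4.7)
The lower bound $r(k,l,m) \geq k+l+m-2$ is immediate from Fact~\ref{fact2.2}. For the matching upper bound, the plan is to induct on $m$, taking the base case $m = k+l-1$ from Lemma~\ref{lemma5.3}, which gives $r(k,l,k+l-1) = 2k+2l-3 = k+l+(k+l-1)-2$.

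For the inductive step I assume $r(k,l,m) \leq k+l+m-2$ for some $m \geq k+l-1$ and aim to show $r(k,l,m+1) \leq k+l+m-1$. Suppose for contradiction that some 3-edge-coloring of $K_{n,n}$ with $n = k+l+m-1$ avoids a red $k$-connected matching, a blue $l$-connected matching, and a green $(m+1)$-connected matching. By Remark~\ref{remark2.2}, each green component whose matching number is at least $m$ uses at least $m$ vertices on each side, so the number $p$ of green $m$-connected matchings satisfies $pm \leq n = k+l+m-1$; since $m \geq k+l-1$ this yields $p \leq 2$, with $p = 2$ possible only in the boundary situation $m = k+l-1$.

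The plan is then to split into three short cases, each producing a coloring of $K_{k+l+m-2, k+l+m-2}$ with no red $k$-connected, no blue $l$-connected and no green $m$-connected matching, contradicting the inductive hypothesis. If $p = 0$, delete one arbitrary vertex from each side. If $p = 1$, with $G$ the unique green $m$-connected matching, choose a side containing a cover vertex of $G$ (the minimum vertex cover of $G$ has size $m \geq 1$, so meets some side), delete that cover vertex and an arbitrary vertex on the opposite side, and apply K\"onig's theorem to drop the maximum matching of $G$ to at most $m-1$.

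The main obstacle is $p = 2$, where both green $m$-connected matchings must be destroyed by a single removal per side. Here $m = k+l-1$ and $n = 2m$, and the inequalities $|G_1 \cap V_j| + |G_2 \cap V_j| \leq 2m$ combined with $|G_i \cap V_j| \geq m$ force equality $|G_i \cap V_j| = m$ throughout; Remark~\ref{remark2.2} then rules out $|G_i \cap V_j| = m+1$, so each $G_i$ lies in both $V_1$ and $V_2$ and, by Remark~\ref{remark2.1}, each $G_i \cap V_j$ is itself a minimum vertex cover of $G_i$ consisting entirely of cover vertices. Removing one cover vertex of $G_1$ from $V_1$ together with one cover vertex of $G_2$ from $V_2$ simultaneously cuts the maximum matchings of $G_1$ and $G_2$ down to at most $m-1$, and the usual contradiction closes the case. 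The crucial observation that unlocks this case is that the tight counting at $m = k+l-1$ is exactly what forces cover vertices of both $G_i$ to appear on \emph{both} sides of the bipartition.
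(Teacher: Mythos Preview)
Your proposal is correct and follows essentially the same route as the paper: lower bound from Fact~\ref{fact2.2}, induction on $m$ with base case $m=k+l-1$ supplied by Lemma~\ref{lemma5.3}, and in the inductive step a case split on the number $p$ of green $m$-connected matchings, with the $p=2$ case handled via the counting $2m\le k+l+m-1$ forcing $m=k+l-1$ and hence $|G_i\cap V_j|=m$ on both sides, so that a single cover-vertex removal from $G_1\cap V_1$ and from $G_2\cap V_2$ suffices. The only cosmetic difference is that you explicitly treat $p=0$, which the paper leaves implicit.
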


  \begin{proof}
    By Fact \ref{fact2.2}, $r(k,l,m)\geq k+l+m-2$, so we need to show that $r(k,l,m)\leq k+l+m-2$ for $2\leq k< l, m\geq k+l-1$.
    Using induction on $m$. The assertion holds for $m=k+l-1$ by Lemma \ref{lemma5.3}. Assume that $m\geq k+l-1$ and the assertion holds for $m$. If $r(k,l,m+1)\neq k+l+m-1$, then there exist a 3-edge-coloring of $K_{k+l+m-1,k+l+m-1}$ such that there is no red $k$-connected matching, no blue $l$-connected matching and at most two green $m$-connected matchings.

    Case 1: There is one green $m$-connected matching (say $G$). Then remove one cover vertex from $G$ in $V_{i}$ $(i=1,2)$ and any other vertex in $V_{3-i}$. Now, there is no red $k$-connected matching, no blue $l$-connected matching and no green $m$-connected matching in the remaining $K_{k+l+m-2,k+l+m-2}$. A contradiction to our induction hypothesis.

    Case 2: There are two green $m$-connected matchings (say $G_{1}, G_{2}$). Then $2m\leq k+l+m-1$ which shows that $m=k+l-1$. So each of $G_{i}$ has exactly $k+l-1$ vertices in both $V_{1}$ and $V_{2}$ and each vertex in $G_{i}\cap V_{j}$ is a cover vertex for $G_{i}$ $(1\leq i,j \leq 2)$. Remove one cover vertex from $G_{1}\cap V_{1}$ and one cover vertex from $G_{2}\cap V_{2}$. A contradiction again.

    So we have shown that $r(k,l,m)\leq k+l+m-2$ for $2\leq k< l, m\geq k+l-1$.\q
  \end{proof}

  Combining Theorem \ref{theo5.1}, \ref{theo5.4}, \ref{theo5.2}, we obtain Theorem \ref{theo3}.


  \section{Appendix: Proof of Theorem \ref{theo2}}

  \subsection{Monochromatic connected matchings in almost complete bipartite graphs}

  The main result in this section is to extend Theorem \ref{theo3} to an almost complete bipartite graph. We just give the proof for the case $\alpha_{1}< \alpha_{2}, \alpha_{3}\geq \alpha_{1}+\alpha_{2}$, two other cases can be proven in the same way.

  \begin{theo}\label{theo4.1}
    Let $0< \alpha_{1}< \alpha_{2}, \alpha_{3}\geq \alpha_{1}+\alpha_{2}, \beta=(2\lceil \frac{\alpha_{1}+ \alpha_{2}+\alpha_{3}+1}{\alpha_{1}} \rceil)^6,\gamma= \frac{12}{\alpha_{1}} \lceil \frac{\alpha_{1}+ \alpha_{2}+ \alpha_{3}+ 1}{\alpha_{1}} \rceil$. For every $0< \varepsilon <\frac{1}{\beta+ (\alpha_{1}+ \alpha_{2}+\alpha_{3})\gamma}$, there is $n_{0} = n_{0}(\varepsilon)$ such that the following holds. For $n> n_{0}$, let $G$ be a bipartite graph with partition $\{V_{1}, V_{2}\}$ and $|V_{1}|=|V_{2}|=N$, where $N\geq (\alpha_{1}+ \alpha_{2}+\alpha_{3}+(\beta+ (\alpha_{1}+ \alpha_{2}+\alpha_{3})\gamma)\varepsilon)n$. Suppose that every vertex in $V_{1}$ has at most $\varepsilon n$ non-neighbours in $V_{2}$ and vice versa. Then for every red-blue-green-edge-coloring of $G$, there is a red $\lfloor \alpha_{1}n \rfloor$-connected matching or a blue $\lfloor \alpha_{2}n \rfloor$-connected matching or a green $\lfloor \alpha_{3}n \rfloor$-connected matching.
  \end{theo}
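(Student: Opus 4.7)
The plan is to reduce Theorem \ref{theo4.1} to the exact bipartite Ramsey bound from Theorem \ref{theo5.2} via a coloring-extension argument, followed by a clean-up step that removes any fake edges introduced by the extension. First I would set inflated target sizes $k^{\ast}=\lfloor \alpha_{1} n\rfloor + a$, $l^{\ast}=\lfloor \alpha_{2} n\rfloor + b$, $m^{\ast}=\lfloor \alpha_{3} n\rfloor + c$ with each buffer of order $\gamma\varepsilon n$, then color every non-edge of $G$ red, obtaining a 3-edge-coloring of the complete bipartite graph $K_{V_{1},V_{2}}$. Because $N\geq k^{\ast}+l^{\ast}+m^{\ast}-2$ by our slack choice, Theorem \ref{theo5.2} yields a monochromatic connected matching of the inflated size in one of the three colors.

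If the matching returned by Theorem \ref{theo5.2} is blue or green, the proof is complete: these color classes contain only edges of $G$, so the connected matching lives in $G$ and its size comfortably exceeds the required target $\lfloor \alpha_{i}n\rfloor$. The substantive case is the red output, since its underlying component $C$ may use fake edges both for internal connectivity and for some of its matching edges. My plan here is to exploit the quasi-completeness hypothesis: each vertex of $G$ has at most $\varepsilon n$ non-neighbours, so within the large set $V(C)$ (which contains at least $2k^{\ast}$ vertices) fake edges are few relative to the component size. I would then argue that the real-red subgraph on $V(C)$ still carries a large connected matching via augmenting-path swaps, each fake matching edge being replaced by a real red edge found using the quasi-completeness applied to suitably chosen subsets on the two sides; connectivity is maintained because any two subsets of size exceeding $\varepsilon n$ in $V(C)\cap V_{1}$ and $V(C)\cap V_{2}$ have a real edge between them, and within $V(C)$ that edge must be red.

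The main obstacle will be the delicate bookkeeping that matches the specific buffer $(\beta+(\alpha_{1}+\alpha_{2}+\alpha_{3})\gamma)\varepsilon n$ in the hypothesis on $N$. The linear coefficient $\gamma=O(1/\alpha_{1})$ corresponds to the number of iterative swap-and-prune rounds needed to eliminate fake edges from the red matching while preserving connectivity, each round removing $O(\varepsilon n)$ vertices from each part. The sixth-power coefficient $\beta=O(1/\alpha_{1}^{6})$ reflects a nested case analysis within each iteration, tracking how cover vertices of the current red structure distribute across $V_{1}$ and $V_{2}$ and how they interact with the blue and green components, in the spirit of the case analyses in Lemma \ref{lemma5.3} and Theorem \ref{theo5.2}. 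The concluding step is to verify that the cumulative vertex deletion across all iterations stays within the allotted slack, so that the inductive application of the exact result at each stage continues to go through and produces the desired monochromatic connected matching of size $\lfloor \alpha_{i} n\rfloor$.
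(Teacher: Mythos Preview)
Your approach has a genuine gap in the red case. The claim ``within $V(C)$ that edge must be red'' is false: if $C$ is the red component in the extended $K_{N,N}$ and $u\in V(C)\cap V_1$, $v\in V(C)\cap V_2$, then a real edge $uv$ of $G$ may perfectly well be blue or green; membership of $u,v$ in $C$ only says they are joined by a red path, possibly through fake edges. So quasi-completeness gives you a real edge between two large subsets of $V(C)$, but not a real \emph{red} edge, and your connectivity-repair and swap arguments collapse. In the extreme case where $G$ has no red edges at all, the red subgraph of your extended $K_{N,N}$ is exactly the non-edge graph of $G$ (max degree $\le\varepsilon n$), which can easily contain a connected matching of size $\alpha_1 n$ (e.g.\ a long path); Theorem \ref{theo5.2} may hand you this entirely fake red structure, and there is nothing to salvage.

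This is also why your reading of the constants is off. In the paper, $\beta=(2\lceil\frac{\alpha_1+\alpha_2+\alpha_3+1}{\alpha_1}\rceil)^6$ is not a case-analysis depth; it is the number of ``types'' $(a,b,c,d,e,f)$ of a non-edge according to which \emph{virtual component} each endpoint lies in for each of the three colours, and it controls the size of a vertex set covering all ``bad'' non-edges (those lying in no single-colour virtual component). The constant $\gamma$ is not an iteration count; it bounds the size of any matching of missing edges inside one virtual component, via an argument that pigeonholes the endpoints across the bounded number of virtual components of a second colour and then forces a contradiction in the third. The paper's scheme is symmetric in the three colours: it partitions each colour into boundedly many virtual components, colours every non-bad non-edge with the colour of a virtual component containing it, deletes $O(\beta\varepsilon n)$ vertices to kill the bad non-edges, applies Theorem \ref{theo5.2} to the resulting complete bipartite graph, and then uses the $\gamma\varepsilon\alpha_i n$ bound to peel off the few fake matching edges while K\"onig's theorem guarantees the matching number in the original virtual component was unchanged. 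Your asymmetric ``dump all non-edges into red'' shortcut loses exactly the structural control that makes this work.
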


  The proof follows from \cite{BLS}. The idea is to add the non-edge to $G$ such that it becomes a complete bipartite graph, then apply Theorem \ref{theo3}. Let $N= \lceil (\alpha_{1}+ \alpha_{2}+\alpha_{3}+(\beta+ (\alpha_{1}+ \alpha_{2}+\alpha_{3})\gamma)\varepsilon)n \rceil$.

  \begin{defi}\label{def4.1}
    We call $C_{R,i}$ a red virtual component if $C_{R,i}$ is a red component of order at least $\alpha_{1}n$ or a maximal union of red components with order no more than $2\alpha_{1}n$ (the order of each of components is no more than $\alpha_{1}n$). Define blue virtual component and green virtual component in the same way.
  \end{defi}

  \begin{remark}\label{remark4.1}
    It is obvious that each virtual component has no intersection with other virtual components in the same color. The maximum number of virtual components in $G$ is at most $2\lceil \frac{\alpha_{1}+ \alpha_{2}+\alpha_{3}+1}{\alpha_{1}} \rceil$ in each of colors.
  \end{remark}

  \begin{proof}
    W.L.O.G, consider red virtual components. It is easy to know that all but at most one of the virtual components have order at least $\alpha_{1}n$. So the maximum number of red virtual components in $G$ is at most $2\times \lceil \frac{(\alpha_{1}+ \alpha_{2}+\alpha_{3}+(\beta+ (\alpha_{1}+ \alpha_{2}+\alpha_{3})\gamma )\varepsilon )n}{\alpha_{1}n}\rceil \leq 2\lceil \frac{\alpha_{1}+ \alpha_{2}+\alpha_{3}+1}{\alpha_{1}} \rceil$ by the choice of $\varepsilon$. Similarly, the maximum number of blue (green) virtual components in $G$ is at most $2\lceil \frac{\alpha_{1}+ \alpha_{2}+\alpha_{3}+1}{\alpha_{2}} \rceil \leq 2\lceil \frac{\alpha_{1}+ \alpha_{2}+\alpha_{3}+1}{\alpha_{1}} \rceil$ ($2\lceil \frac{\alpha_{1}+ \alpha_{2}+\alpha_{3}+1}{\alpha_{3}} \rceil \leq 2\lceil \frac{\alpha_{1}+ \alpha_{2}+\alpha_{3}+1}{\alpha_{1}} \rceil$).
  \end{proof}

  \begin{defi}\label{def4.2}
    We call a non-edge bad if it is not contained in any virtual component.
  \end{defi}

  \begin{lemma}\label{lemma4.1}
    There is a set of at most $\beta \varepsilon n$ vertices that cover all bad non-edges in $G$ in each $V_{i}$ $(i=1,2)$.
  \end{lemma}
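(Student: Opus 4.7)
The plan is to assign each vertex a \emph{type} recording its virtual-component memberships and then use the maximum non-degree hypothesis to argue that the vertices incident to any bad non-edge live in extremely small type-classes. Since the virtual components of each colour partition $V_1 \cup V_2$, we may assign each vertex $u$ the type $(R(u), B(u), G(u))$, where $R(u)$, $B(u)$, $G(u)$ denote the red, blue, and green virtual components containing $u$; by Remark \ref{remark4.1} the number of possible types is at most $K^3$, where $K = 2\lceil (\alpha_1+\alpha_2+\alpha_3+1)/\alpha_1 \rceil$. Let $A_T = \{u \in V_1 : u \text{ has type } T\}$ and $B_{T'} = \{v \in V_2 : v \text{ has type } T'\}$, and call an ordered pair of types $(T, T')$ \emph{bad} if $T$ and $T'$ disagree in all three coordinates.

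The key structural step will be to show: if $(T, T')$ is bad and both $A_T$ and $B_{T'}$ are non-empty, then every pair $(u, v) \in A_T \times B_{T'}$ is a non-edge of $G$. The proof is immediate---any edge $uv$ of colour $c$ would place $u$ and $v$ in the same $c$-component, and hence (since each monochromatic component is contained in a unique virtual component of the same colour) in the same $c$-virtual component, contradicting that $T$ and $T'$ differ in coordinate $c$. Fixing any $v \in B_{T'}$, the hypothesis that $v$ has at most $\varepsilon n$ non-neighbours in $V_1$ then forces $|A_T| \leq \varepsilon n$, and symmetrically $|B_{T'}| \leq \varepsilon n$.

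To conclude, a vertex $u \in V_1$ is incident to a bad non-edge if and only if its type $T$ admits a bad partner $T'$ with $B_{T'} \neq \emptyset$; hence the set $X_1$ of $V_1$-endpoints of bad non-edges is contained in a union of at most $K^3$ sets $A_T$, each of size at most $\varepsilon n$. This gives $|X_1| \leq K^3 \varepsilon n \leq K^6 \varepsilon n = \beta \varepsilon n$, and the symmetric argument bounds the analogous set $X_2 \subseteq V_2$. There is no real obstacle here; the one point that must be verified carefully is the structural observation, which hinges on the fact that each monochromatic component sits inside exactly one virtual component of the same colour (a direct consequence of the definition of virtual component), so that a monochromatic edge does constrain virtual-component membership even though two vertices in the same virtual component need not be joined by an edge.
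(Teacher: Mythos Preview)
Your proof is correct and follows essentially the same approach as the paper: assign each vertex a triple recording its red/blue/green virtual-component membership, observe that a bad non-edge forces its endpoints to have triples differing in every coordinate, and use the non-degree bound to show the relevant type-classes have size at most $\varepsilon n$. The only cosmetic difference is bookkeeping: the paper indexes by the full $6$-tuple $(a,b,c,d,e,f)$ of both endpoints' types and hence sums $K^{6}$ contributions, whereas you sum over the $K^{3}$ types on one side, yielding the sharper bound $K^{3}\varepsilon n$ before relaxing to $\beta\varepsilon n = K^{6}\varepsilon n$.
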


  \begin{proof}
    Each bad edge can be represented by Type $(a,b,c,d,e,f)$ ($a\neq d, b\neq e, c\neq f; a,b,c,d,e,f\in [1,2\lceil \frac{\alpha_{1}+ \alpha_{2}+\alpha_{3}+1}{\alpha_{1}} \rceil]$) such that one of its ends belongs to $C_{R,a}\cap C_{B,b}\cap C_{G,c}= U$ and another end belongs to $C_{R,d}\cap C_{B,e}\cap C_{G,f}= W$. (Recall that $\beta=(2\lceil \frac{\alpha_{1}+ \alpha_{2}+\alpha_{3}+1}{\alpha_{1}} \rceil)^6$ and there are $\beta$ choices for $(a,b,c,d,e,f)$. ) Now for each fixed $(a,b,c,d,e,f)$, we claim that $|U|, |W|\leq \varepsilon n$. Otherwise, since each vertex has at most $\varepsilon n$ non-neighbors, then there is an edge between $U$ and $W$. W.L.O.G, assume that it is in red (same as blue or green), then $C_{R,a}$ and $C_{R,d}$ are not disconnected in red, a contradiction to Remark \ref{remark4.1}. So we have shown that all bad non-edges of $(a,b,c,d,e,f)$ can be covered by a set ($U\cup W$) with at most $\varepsilon n$ vertices in each $V_{i}$ $(i=1,2)$.\q
  \end{proof}

  Take a virtual component $C$ and a minimum vertex cover $W$ in $C$. Now add non-edges to $G$ which are incident with $W$ inside $C$ and colour them by the same color as $C$. Repeat it until no non-edge can be added and denote the resulting graph by $G_{1}$. It is easy to see that the cardinality of a minimum cover of $C$ in $G_{1}$ is the same as in $G$, and by K\"onig's theorem, the size of the maximum matching in each virtual component in $G_{1}$ is the same as in $G$.

  A pair of vertices not an edge in $G_{1}$ is called a missing edge. The next lemma says that there is not too much pairwise disjoint missing edges in each virtual components in $G_{1}$.

  \begin{lemma}\label{lemma4.2}
    Let $C_{R}$ $(C_{B}, C_{G})$ be a red (blue, green) virtual component in $G_{1}$, and let $M$ be a matching of missing edges spanned by $C_{R}$ $(C_{B}, C_{G})$. Then $M$ contains at most $\gamma \varepsilon \alpha_{1} n$ $(\gamma \varepsilon \alpha_{2} n, \gamma \varepsilon \alpha_{3} n)$ missing edges.
  \end{lemma}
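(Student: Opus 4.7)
We treat the red case; the blue and green cases are symmetric (with $\alpha_{1}$ replaced by $\alpha_{2}$ or $\alpha_{3}$ in the bound). Suppose, toward a contradiction, that $|M|>\gamma\varepsilon\alpha_{1}n$, and write $M=\{u_iv_i:1\leq i\leq t\}$ with $u_i\in V_1$, $v_i\in V_2$, and $t>\gamma\varepsilon\alpha_{1}n$. Let $W$ be the minimum vertex cover of $C_R$ selected during the construction of $G_1$. Since every non-edge incident with $W$ inside $V(C_R)$ was turned red in passing from $G$ to $G_1$, any missing edge of $G_1$ contained in $V(C_R)$ must have both endpoints in $V(C_R)\setminus W$; hence $\{u_i,v_i:1\leq i\leq t\}$ is independent in the red subgraph of $G$. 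Consequently, for any $i\neq j$ the edge $u_iv_j$, if present in $G$, is blue or green: a red $u_iv_j$ would have both endpoints outside the red vertex cover $W$ of $C_R$, which is impossible.

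Set $K:=\lceil(\alpha_{1}+\alpha_{2}+\alpha_{3}+1)/\alpha_{1}\rceil$. By Remark~\ref{remark4.1}, there are at most $2K$ blue virtual components and at most $2K$ green virtual components. Each $u_i$ (resp.\ $v_i$) lies in a unique pair (blue VC, green VC); pigeonhole on the $u_i$'s and then on the $v_i$'s extracts a sub-matching $M'\subseteq M$ of size at least $t/(2K)^4$ whose $u$-endpoints all share one pair $(B_U,G_U)$ and whose $v$-endpoints all share one pair $(B_V,G_V)$. The key structural observation is: a blue edge of $G$ between a $u$-endpoint and a $v$-endpoint of $M'$ forces $B_U=B_V$, and a green such edge forces $G_U=G_V$.

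This splits into two cases. In the first case, $B_U\neq B_V$ and $G_U\neq G_V$, so no edge of $G$ joins the $u$-endpoints and $v$-endpoints of $M'$; each such $u_i$ then has at least $|M'|$ non-neighbours in $V_2$, forcing $|M'|\leq\varepsilon n$, which combined with $|M'|\geq t/(2K)^4$ and the precise value $\gamma=\frac{12}{\alpha_{1}}K$ will, after careful book-keeping of the pigeonhole losses, yield a contradiction with $t>\gamma\varepsilon\alpha_{1}n$. In the second case, say $B_U=B_V=:C_B$, all blue edges between the two sides of $M'$ lie inside the single blue virtual component $C_B$, whose vertex set has order at least $\alpha_{2}n$. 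The bipartite subgraph between the two sides of $M'$ inside $C_B$ then has at least $|M'|^{2}-|M'|\varepsilon n$ blue edges; using the almost-complete bipartite structure of $C_B$ together with the $\varepsilon n$ non-neighbour bound and K\"onig's theorem, amplify this into a blue connected matching of size at least $\lfloor\alpha_{2}n\rfloor$ in $G$, contradicting the standing hypothesis (within the proof of Theorem~\ref{theo4.1}) that no such monochromatic connected matching exists.

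The main obstacle will be the amplification step in the second case: turning a sub-matching of size $\Theta(\varepsilon n)$ inside $C_B$ into a connected monochromatic matching of size $\Theta(n)$ in $G$, by extending the sub-matching through vertices of $C_B$ lying outside $M'$ (using the degree condition to iteratively augment via an alternating path / Hall-type argument). The constant $\gamma=\frac{12}{\alpha_{1}}\lceil(\alpha_{1}+\alpha_{2}+\alpha_{3}+1)/\alpha_{1}\rceil$ is tuned precisely so that the pigeonhole loss in the first case and the amplification threshold in the second case both close the argument.
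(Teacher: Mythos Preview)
Your proposal has two genuine gaps, one arithmetic and one conceptual, and both stem from missing the central trick.

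\textbf{Arithmetic in Case 1.} You pigeonhole on four labels (blue VC of $u_i$, green VC of $u_i$, blue VC of $v_i$, green VC of $v_i$), losing a factor of $(2K)^4$ where $K=\lceil(\alpha_1+\alpha_2+\alpha_3+1)/\alpha_1\rceil$. With $\gamma=\tfrac{12}{\alpha_1}K$ you get $|M'|\geq t/(2K)^4>12K\varepsilon n/(16K^4)=3\varepsilon n/(4K^3)$, which is far smaller than $\varepsilon n$, so the non-neighbour bound $|M'|\leq\varepsilon n$ is no contradiction. The constant $\gamma$ is tuned to survive a loss of $4K$, not $(2K)^4$.

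\textbf{Case 2 is the wrong approach.} Even granting $B_U=B_V=:C_B$, the edges between the two sides of $M'$ need not be blue; they may all be green. More fundamentally, the ``amplification'' you describe---turning a sub-matching of size $\Theta(\varepsilon n)$ into a blue connected matching of size $\alpha_2 n$---does not follow from anything available. The blue virtual component $C_B$ may have maximum matching far below $\alpha_2 n$; nothing in the hypotheses prevents this. You also invoke a hypothesis (``no blue $\lfloor\alpha_2 n\rfloor$-connected matching'') that the lemma does not carry.

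\textbf{The missing idea.} You correctly observe that since $u_iv_i$ is missing in $G_1$ and both endpoints lie in $C_R$, neither endpoint lies in the chosen minimum cover $W_R$, whence the endpoints span no red edges. The point you did not exploit is that this same argument works for \emph{every} virtual component containing both endpoints of a missing edge, not just the red one. The paper runs a single pigeonhole on the blue VCs of the $u_i$'s (loss $2K$), then halves the $v_i$'s according to whether they lie in the resulting blue VC $C_B$ or not (loss $2$). In either subcase the retained sets $X,Y$ span no blue edges: if $Y\subseteq C_B$, apply the cover-avoidance argument to $C_B$; if $Y\cap C_B=\emptyset$, there are trivially no blue edges. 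Now $|X|=|Y|\geq t/(4K)\geq 3\varepsilon n$, so any two vertices in $X$ share a neighbour in $Y$ and the bipartite graph $G[X,Y]$ is connected; all its edges are green, so $X\cup Y$ lies in one green VC $C_G$. Apply the cover-avoidance argument once more to $C_G$: no green edges between $X$ and $Y$. This contradicts the existence of edges just established. No amplification or external hypothesis is needed.
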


  \begin{proof}
    W.L.O.G, assume that the virtual component in $G_{1}$ is red. Let $\{x_{1}y_{1},...,x_{t}y_{t}\}$ be a matching of missing edges spanned by $C_{R}$ and suppose that $t\geq \gamma \varepsilon \alpha_{1} n$. Since $x_{i}y_{i}$ is missing, neither $x_{i}$ nor $y_{i}$ is in a minimum cover $W_{R}$ of $C_{R}$. Otherwise, we would have added $x_{i}y_{i}$ to $G_{1}$. So no red edges are spanned by $\{x_{1},...,x_{t},y_{1},...,y_{t}\}$. Otherwise, assume that $x_{i}y_{j}$ is a red edge, then $x_{i}y_{j}$ is not covered by $W_{R}$. A contradiction to that $W_{R}$ is a minimum cover of $C_{R}$.

    Since there are at most $2\lceil \frac{\alpha_{1}+ \alpha_{2}+\alpha_{3}+1}{\alpha_{1}} \rceil$ blue virtual components, there exist a blue virtual component $C_{B}$ that contains at least $\frac{t}{2\lceil \frac{\alpha_{1}+ \alpha_{2}+\alpha_{3}+1}{\alpha_{1}} \rceil}$ of $x_{i}$'s. Let $s = \frac{t}{2\lceil \frac{\alpha_{1}+ \alpha_{2}+\alpha_{3}+1}{\alpha_{1}} \rceil}$. Suppose that $x_{1},...,x_{s}\in C_{B}$.

    If there is at least half of the vertices $y_{1},...,y_{s}$ are in $C_{B}$. Assume $Y=\{y_{1},...,y_{\frac{s}{2}}\}$ is contained in $C_{B}$ and let $X=\{x_{1},...,x_{\frac{s}{2}}\}$. As discussed above, for a minimum cover $W_{B}$ of $C_{B}$, $x_{i}$ and $y_{i}$ are not in $W_{B}$ which implies that $X\cup Y$ spans no blue edges.

    Else there is at least half of the vertices $y_{1},...,y_{s}$ which are not in $C_{B}$. Suppose that the set $Y=\{y_{1},...,y_{\frac{s}{2}}\}$ is disjoint from $C_{B}$ and let $X=\{x_{1},...,x_{\frac{s}{2}}\}$. Then there are no blue edges between $X$ and $Y$.

    In each case, $X\cup Y$ spans neither red edge nor blue edge. Since $\frac{s}{2}\geq \frac{3\varepsilon \alpha_{1} n}{\alpha_{1}} = 3\varepsilon n$ and each vertex has at least $\frac{s}{2}-\varepsilon n \geq 2\varepsilon n$ neighbors (recall that $\gamma= \frac{12}{\alpha_{1}} \lceil \frac{\alpha_{1}+ \alpha_{2}+ \alpha_{3}+ 1}{\alpha_{1}} \rceil$), then $G_{1}[X,Y]$ is connected in green. Let $C_{G}$ be the green virtual component containing $X\cup Y$. Since every vertex in $X\cup Y$ is incident with a missing edge spanned by $C_{G}$ (the structure of $G_{1}$), it follows that none of the vertices in $X\cup Y$ is in a minimum cover $W_{G}$ of $C_{G}$. Hence, $X\cup Y$ cannot span any green edge. A contradiction.\q
  \end{proof}

  For each missing edge in $G_{1}$ that is not bad, take a virtual component containing it and add the edge to $G_{1}$ in color of the chosen component. Denote the resulting graph by $G_{2}$. Now, we are ready to prove the main result in this section.\\

  \noindent \emph{Proof of Theorem \ref{theo4.1}:}
  Let $W$ be a set of vertices that cover all bad edges with the same number of vertices on both sides. By Lemma \ref{lemma4.1}, $W$ has size at most $2\beta \varepsilon n$. Let $G_{3}=G_{2}\setminus W$. Then $G_{3}$ is a 3-edge-colored complete bipartite graph with at least $N-\beta \varepsilon n$ vertices on each side. By Theorem \ref{theo5.2}, $G_{3}$ contains a red $((1+\gamma \varepsilon)\alpha_{1}n)$-connected matching since $\frac{\alpha_{1}}{\alpha_{1}+\alpha_{2}+\alpha_{3}} \times (N-\beta \varepsilon n)\geq (1+\gamma \varepsilon)\alpha_{1}n$, or a blue $((1+\gamma \varepsilon)\alpha_{2}n)$-connected matching since $\frac{\alpha_{2}}{\alpha_{1}+\alpha_{2}+\alpha_{3}} \times (N-\beta \varepsilon n)\geq (1+\gamma \varepsilon)\alpha_{2}n$, or a green $((1+\gamma \varepsilon)\alpha_{3}n)$-connected matching since $\frac{\alpha_{3}}{\alpha_{1}+ \alpha_{2}+\alpha_{3}} \times (N-\beta \varepsilon n)\geq (1+\gamma \varepsilon)\alpha_{3}n$. W.L.O.G, assume that $G_{3}$ contains a red $((1+\gamma \varepsilon) \alpha_{1}n)$-connected matching $M$. By the construction of $G_{2}$, $M$ is contained in a red virtual component $C_{R}$. Note that $M$ spans more than $\alpha_{1} n$ edges, so $C_{R}$ must be connected (not a union of several red components). By Lemma \ref{lemma4.2}, at most $\gamma \varepsilon \alpha_{1} n$ of the edges in $M$ are missing in $G_{1}$. That means $C_{R}$ spans a matching on at least $\alpha_{1} n$ edges in $G_{1}$. By the construction of $G_{1}$, the component $C_{R}$ spans a matching with at least $\alpha_{1} n$ edges which shows that $G$ contains a red $\alpha_{1} n$-connected matching.\q

  In the same way, we can obtain:

  \begin{theo}\label{theo4.2}
    Let $0< \alpha_{1}< \alpha_{2}< \alpha_{3}\leq \frac{\alpha_{1}}{2}+ \alpha_{2}, \beta= (2\lceil \frac{\alpha_{1}+ 2\alpha_{3}+ 1}{\alpha_{1}} \rceil)^6,\gamma= \frac{12}{\alpha_{1}} \lceil \frac{\alpha_{1}+ 2\alpha_{3}+ 1}{\alpha_{1}} \rceil$. For every $0< \varepsilon <\frac{1}{\beta+ (\alpha_{1}+ 2\alpha_{3}) \gamma}$, there is $n_{0} = n_{0}(\varepsilon)$ such that the following holds. For $n> n_{0}$, let $G$ be a bipartite graph with partition $\{V_{1}, V_{2}\}$ and $|V_{1}|=|V_{2}|=N$, where $N\geq (\alpha_{1}+ 2\alpha_{3}+(\beta+ (\alpha_{1}+ 2\alpha_{3})\gamma )\varepsilon )n$. Suppose that every vertex in $V_{1}$ has at most $\varepsilon n$ non-neighbours in $V_{2}$ and vice versa. Then for every red-blue-green-edge-coloring of $G$, there is a red $\lfloor \alpha_{1}n \rfloor$-connected matching or a blue $\lfloor \alpha_{2}n \rfloor$-connected matching or a green $\lfloor \alpha_{3}n \rfloor$-connected matching.
  \end{theo}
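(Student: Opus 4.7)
The plan is to mirror the proof of Theorem 4.1 line for line, replacing the final appeal to Theorem 5.2 by the slightly stronger upper bound established inside the proof of Theorem 5.1, namely $r(k,l,m) \leq k+2m-2$ for every $2 \leq k < l < m$. The constants $\beta=(2\lceil(\alpha_1+2\alpha_3+1)/\alpha_1\rceil)^6$ and $\gamma=(12/\alpha_1)\lceil(\alpha_1+2\alpha_3+1)/\alpha_1\rceil$ have been rescaled so that the quantity $\alpha_1+2\alpha_3$ (the constant appearing in $r(k,l,m)=k+2m-2$) replaces $\alpha_1+\alpha_2+\alpha_3$ everywhere; otherwise the argument depends only on $\alpha_1$ being the smallest of the three thresholds.

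First I would introduce red, blue, and green virtual components exactly as in Definition 4.1 and verify that the number of virtual components of each colour is at most $2\lceil(\alpha_1+2\alpha_3+1)/\alpha_1\rceil$ (as in the remark after Definition 4.1, using $N \leq (\alpha_1+2\alpha_3+1)n$ from the constraint on $\varepsilon$, and dividing by $\alpha_1$ since $\alpha_1$ is the smallest threshold). Next I would restate and reprove Lemma 4.1 and Lemma 4.2 with $\alpha_1+2\alpha_3$ in place of $\alpha_1+\alpha_2+\alpha_3$ in the definitions of $\beta$ and $\gamma$; both proofs go through verbatim, since they use only the uniform bound on the number of virtual components of each colour together with the $\varepsilon n$ non-neighbour hypothesis.

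Following the construction in Theorem 4.1, I would then build $G_1$ by adding non-edges incident to a minimum cover inside each virtual component (preserving matching numbers by K\"onig), add all remaining non-bad missing edges to obtain $G_2$, and delete a balanced vertex cover $W$ of the bad non-edges of size at most $2\beta\varepsilon n$ to obtain $G_3$. Then $G_3$ is a 3-edge-coloured complete bipartite graph with
\[
N-\beta\varepsilon n \;\geq\; (\alpha_1+2\alpha_3)n+(\alpha_1+2\alpha_3)\gamma\varepsilon n \;=\; (1+\gamma\varepsilon)(\alpha_1+2\alpha_3)n
\]
vertices per side, so the upper bound $r(k,l,m)\leq k+2m-2$ applied with $k=\lfloor(1+\gamma\varepsilon)\alpha_1 n\rfloor$, $l=\lfloor(1+\gamma\varepsilon)\alpha_2 n\rfloor$, $m=\lfloor(1+\gamma\varepsilon)\alpha_3 n\rfloor$ forces a monochromatic $\lfloor(1+\gamma\varepsilon)\alpha_i n\rfloor$-connected matching in $G_3$ for some $i\in\{1,2,3\}$. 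Pulling this back exactly as in the final paragraph of the proof of Theorem 4.1, the matching sits inside one virtual component, which must be a genuine connected component of $G$ because its matching exceeds the $2\alpha_i n$ threshold for a union of small components, and losing at most $\gamma\varepsilon\alpha_i n$ matching edges to Lemma 4.2 still leaves an $\lfloor\alpha_i n\rfloor$-connected matching in $G$, finishing the proof.

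The main obstacle is not any single step but the consistent bookkeeping: one has to confirm that the $(\beta+(\alpha_1+2\alpha_3)\gamma)\varepsilon n$ slack allowed in the hypothesis on $N$ absorbs exactly the $\beta\varepsilon n$ vertices lost to $W$ plus the $\gamma\varepsilon\alpha_i n$ matching edges lost when pulling back through $G_2\to G_1\to G$. The hypothesis $\alpha_3\leq\alpha_1/2+\alpha_2$ plays no role in the upper-bound direction invoked above (it is only needed to match the lower bound construction, which is not used in this proof), so no extra care is required at the Ramsey step.
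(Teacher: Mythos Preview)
Your proposal is correct and is precisely the approach the paper intends: the paper does not spell out a separate proof for Theorem~\ref{theo4.2} but simply writes ``In the same way, we can obtain'', and your plan of rerunning the proof of Theorem~\ref{theo4.1} with $\alpha_1+2\alpha_3$ in place of $\alpha_1+\alpha_2+\alpha_3$ and invoking the unconditional upper bound $r(k,l,m)\le k+2m-2$ established inside the proof of Theorem~\ref{theo5.1} is exactly what is meant. Your observation that the hypothesis $\alpha_3\le \alpha_1/2+\alpha_2$ is irrelevant for the upper-bound direction is also correct.
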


  \begin{theo}\label{theo4.3}
    Let $0< \alpha_{1}< \alpha_{2}, \frac{\alpha_{1}}{2}+ \alpha_{2}< \alpha_{3}< \alpha_{1}+ \alpha_{2}, \beta= (2\lceil \frac{2\alpha_{1}+ 2\alpha_{2}+ 1}{\alpha_{1}} \rceil)^6,\gamma= \frac{12}{\alpha_{1}} \lceil \frac{2\alpha_{1}+ 2\alpha_{2}+ 1}{\alpha_{1}} \rceil$. For every $0< \varepsilon <\frac{1}{\beta+ (2\alpha_{1}+ 2\alpha_{2}) \gamma}$, there is $n_{0} = n_{0}(\varepsilon)$ such that the following holds. For $n> n_{0}$, let $G$ be a bipartite graph with partition $\{V_{1}, V_{2}\}$ and $|V_{1}|=|V_{2}|=N$, where $N\geq (2\alpha_{1}+ 2\alpha_{2}+(\beta+ (2\alpha_{1}+ 2\alpha_{2})\gamma )\varepsilon )n$. Suppose that every vertex in $V_{1}$ has at most $\varepsilon n$ non-neighbours in $V_{2}$ and vice versa. Then for every red-blue-green-edge-coloring of $G$, there is a red $\lfloor \alpha_{1}n \rfloor$-connected matching or a blue $\lfloor \alpha_{2}n \rfloor$-connected matching or a green $\lfloor \alpha_{3}n \rfloor$-connected matching.
  \end{theo}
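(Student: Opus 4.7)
The proof is essentially identical in structure to that of Theorem \ref{theo4.1}, with the sole substantive change being that we invoke Theorem \ref{theo5.4} in place of Theorem \ref{theo5.2}. The updated regime $\frac{\alpha_1}{2}+\alpha_2 < \alpha_3 < \alpha_1+\alpha_2$ implies that the exact Ramsey number $r(k,l,m)$ scales like $2k+2l$ rather than $k+l+m$, which is exactly why the hypothesis on $N$ and the constants $\beta, \gamma$ are now written in terms of $2\alpha_1+2\alpha_2$ instead of $\alpha_1+\alpha_2+\alpha_3$.

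The plan is to reuse the construction of the auxiliary graphs $G_1, G_2, G_3$ from the proof of Theorem \ref{theo4.1} verbatim. First I would verify that the preparatory statements transfer without change once $\beta$ and $\gamma$ are rescaled: Remark \ref{remark4.1} bounding the number of virtual components only depends on $N/(\alpha_1 n)$, so the bound $2\lceil(2\alpha_1+2\alpha_2+1)/\alpha_1\rceil$ follows from the same counting; Lemma \ref{lemma4.1} goes through since its argument is purely local (each of the $\beta = (2\lceil(2\alpha_1+2\alpha_2+1)/\alpha_1\rceil)^6$ index sextuples $(a,b,c,d,e,f)$ yields $|U|,|W|\le \varepsilon n$ via the non-neighbour condition and colour-maximality of virtual components); and Lemma \ref{lemma4.2} goes through with $\gamma = (12/\alpha_1)\lceil(2\alpha_1+2\alpha_2+1)/\alpha_1\rceil$ since the pigeonhole argument depends only on the same two ingredients.

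Next, after removing a balanced vertex cover $W$ of size at most $2\beta\varepsilon n$ covering all bad non-edges, the resulting $G_3$ is a $3$-edge-coloured complete bipartite graph with parts of size at least $N - \beta\varepsilon n \ge (2\alpha_1+2\alpha_2)(1+\gamma\varepsilon)n$. I would then apply Theorem \ref{theo5.4} to $G_3$ with $k=\lfloor(1+\gamma\varepsilon)\alpha_1 n\rfloor$, $l=\lfloor(1+\gamma\varepsilon)\alpha_2 n\rfloor$, $m=\lfloor(1+\gamma\varepsilon)\alpha_3 n\rfloor$, obtaining a monochromatic $(1+\gamma\varepsilon)\alpha_i n$-connected matching for some $i$. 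The pull-back to $G$—that this matching lies inside a single virtual component of the corresponding colour, which therefore spans at least $\alpha_i n$ matching edges in $G$ after we subtract the at most $\gamma\varepsilon\alpha_i n$ missing edges permitted by Lemma \ref{lemma4.2}—is identical to the concluding argument of Theorem \ref{theo4.1}.

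The main obstacle, if one must be singled out, is the verification in the final step that the regime condition $l+(k-1)/2 < m < k+l-1$ is preserved under the $(1+\gamma\varepsilon)$-perturbation and under integer rounding, so that Theorem \ref{theo5.4} is genuinely the applicable case of Theorem \ref{theo3}. This is essentially a continuity-and-rounding check: since the original strict inequalities $\alpha_1<\alpha_2$ and $\frac{\alpha_1}{2}+\alpha_2 < \alpha_3 < \alpha_1+\alpha_2$ are preserved under simultaneous scaling by $(1+\gamma\varepsilon)$, and integer rounding perturbs each of $k,l,m$ by only $O(1)$, the inequality holds for all sufficiently large $n$; but it must be carried out carefully in order to avoid inadvertently landing in the regime governed by Theorem \ref{theo5.1} or Theorem \ref{theo5.2}.
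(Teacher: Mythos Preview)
Your proposal is correct and matches the paper's approach exactly: the paper states Theorem \ref{theo4.3} immediately after the proof of Theorem \ref{theo4.1} with only the remark ``In the same way, we can obtain,'' so the intended argument is precisely the one you describe---replay the virtual-component machinery with the rescaled constants and invoke Theorem \ref{theo5.4} in place of Theorem \ref{theo5.2}. Your identification of the regime-preservation check as the only point needing care is apt, and your continuity-and-rounding justification for it is sufficient.
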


  \subsection{Proof of Theorem \ref{theo2}}

  In this section, we will use Regularity Lemma and Theorems \ref{theo4.1}, \ref{theo4.2}, \ref{theo4.3} to complete the proof of Theorem \ref{theo2}. We only give the proof for the case $\alpha_{1}< \alpha_{2}, \alpha_{3}\geq \alpha_{1}+\alpha_{2}$, other two cases can be verified in the same way. 

  Let us recall some basic definitions related to the Regularity Lemma.

  \begin{defi}\label{def5.1}
    Let $A, B$ be disjoint subsets of vertices in a graph $G$. Denote the number of edges in $G$ with one endpoint in $A$ and another in $B$ by $e_{G}(A, B)$ and denote the edge density by $d_{G}(A,B) = \frac{e_{G}(A, B)}{|A||B|}$. Given $\varepsilon >0$, we say that the pair $(A, B)$ is $\varepsilon$-regular (with respect to the graph $G$) if for every $A'\subseteq A$ and $B'\subseteq B$ satisfying $|A'|\geq \varepsilon |A|$ and $|B'|\geq \varepsilon |B|$, we have $$|d_{G}(A',B') - d_{G}(A,B)|\leq \varepsilon.$$
  \end{defi}

  \begin{defi}\label{def5.2}
    A partition $\mathcal{P} = \{P_{0}, P_{1},...,P_{k}\}$ of the vertex set $V$ is said to be $(\varepsilon, k)$-equitable if $|P_{0}|\leq \varepsilon |V|$ and $|P_{1}| =...= |P_{k}|$. And an $(\varepsilon, k)$-equitable partition $\mathcal{P}$ is $(\varepsilon, k)$-regular if all but at most $\varepsilon \tbinom {k}{2}$ pairs $(P_{i}, P_{j})$ with $1\leq i< j\leq k$ are $\varepsilon$-regular.
  \end{defi}

  Szemer\'edi's regularity lemma states that for any $\varepsilon$ and $k_{0}$ there are $K_{0} = K_{0}(\varepsilon,k_{0})$ such that any graph admits an $(\varepsilon, k)$-regular partition with $k_{0}\leq k\leq K_{0}$. We will apply the following multicolored version of Regularity Lemma for bipartite graphs.

  \begin{lemma}[\cite{BLS}] \label{lemma6.1}
    For any $\varepsilon >0$ and $k_{0}$ there exist $K_{0} = K_{0}(\varepsilon ,k_{0})$, such that the following holds. Let $G$ be a 3-colored bipartite graph, with partition $\{V_{1}, V_{2}\}$, where $|V_{1}| = |V_{2}| = n$. Then there exists an $(\varepsilon, 2k)$-equitable partition $\mathcal{P} = \{V_{0}, U_{1}, U_{2},...,U_{k}, W_{1}, W_{2},..., W_{k}\}$ of $V(G)$ such that the following properties hold:

    (a) every $U_{i}$, for $i\geq 1$, is contained in $V_{1}$ and every $W_{j}$, for $j\geq 1$, is contained in $V_{2}$;

    (b) $|V_{0}\cap V_{1}|=|V_{0}\cap V_{2}|$;

    (c) $k_{0}\leq k\leq K_{0}$;

    (d) for every $i\in [k]$, for all but at most $\varepsilon k$ values of $j\in [k]$, $(U_{i},W_{j})$ is $\varepsilon$-regular with respect to each of colours of $G$.
  \end{lemma}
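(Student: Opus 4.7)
The plan is to prove Lemma~\ref{lemma6.1} by adapting the standard energy-increment proof of Szemer\'edi's Regularity Lemma to the three-color bipartite setting. Let $G^{(1)}, G^{(2)}, G^{(3)}$ denote the monochromatic subgraphs of $G$ in the red, blue, and green colors respectively. We restrict attention throughout to equitable partitions of the form $\mathcal{P}=\{V_0, U_1,\ldots,U_k, W_1,\ldots,W_k\}$ with $U_i\subseteq V_1$ and $W_j\subseteq V_2$ and $|V_0\cap V_1|=|V_0\cap V_2|$, so that (a) and (b) are built into the construction and need only be preserved under refinement.

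For such a partition and each color $c\in\{1,2,3\}$, define the bipartite mean-square density
\[
q_c(\mathcal{P}) \;=\; \sum_{i,j=1}^{k}\frac{|U_i|\,|W_j|}{|V_1|\,|V_2|}\,d_{G^{(c)}}(U_i,W_j)^2,
\]
and let $q(\mathcal{P})=q_1(\mathcal{P})+q_2(\mathcal{P})+q_3(\mathcal{P})$, so $0\leq q(\mathcal{P})\leq 3$. Start with an arbitrary equitable partition of $V_1$ and $V_2$ into $k_0$ parts each and $V_0=\emptyset$. The key step is the defect-form Cauchy--Schwarz argument: if the current $\mathcal{P}$ fails to be $(\varepsilon,2k)$-regular with respect to some color $c$, then more than $\varepsilon\binom{k}{2}$ pairs $(U_i,W_j)$ witness irregularity in color $c$, and each such witness yields sub-partitions of $U_i$ and $W_j$ that strictly increase the $c$-contribution to the energy. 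Taking the common refinement of all such witnesses across the three colors produces a partition $\mathcal{P}'$ with $q(\mathcal{P}')\geq q(\mathcal{P})+\varepsilon^5$.

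Since $q\leq 3$, this refinement procedure terminates in at most $3/\varepsilon^5$ iterations and produces a partition that is simultaneously $(\varepsilon,2k)$-regular for every color. The number of parts grows by a tower-type function and is bounded by $K_0(\varepsilon,k_0)$ at termination. At each refinement step, any cells too small to maintain the equitable property are swept into $V_0$; doing this symmetrically on the two sides preserves $|V_0\cap V_1|=|V_0\cap V_2|$, and the accumulated exceptional vertices total at most $\varepsilon n$ per side provided $k_0$ is chosen sufficiently large relative to $1/\varepsilon$.

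The main technical point is purely bookkeeping: orchestrating the refinement simultaneously for all three colors while respecting the bipartition, the equitable cell sizes, and the balance of $V_0$ across $V_1$ and $V_2$. The bipartite restriction imposes essentially no new difficulty because every nontrivial energy contribution already comes from pairs $(U_i,W_j)$ with one part in each side, so refinement of $V_1$-cells and $V_2$-cells can naturally be performed independently. This completes the proof outline.
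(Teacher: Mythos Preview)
The paper does not prove Lemma~\ref{lemma6.1}; it is quoted verbatim from \cite{BLS} and used as a black box, so there is no in-paper proof to compare against. Your outline is the standard energy-increment derivation of a multicolour bipartite regularity lemma, and the overall strategy is sound.

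One point deserves attention. What your iteration actually produces is a partition that is $(\varepsilon,2k)$-regular in the sense of Definition~\ref{def5.2}: globally, all but $\varepsilon k^{2}$ of the cross pairs $(U_i,W_j)$ are $\varepsilon$-regular in every colour. Condition~(d), however, is the stronger \emph{degree form}: for \emph{every} $i$, at most $\varepsilon k$ of the $W_j$ form an irregular pair with $U_i$. Your write-up stops at the global count and never performs the extra cleanup needed to upgrade to~(d). The fix is routine---run the increment with a smaller parameter (say $\varepsilon^{2}$ in place of $\varepsilon$), then by averaging at most $\varepsilon k$ of the $U_i$'s lie in more than $\varepsilon k$ irregular pairs; dump those blocks into $V_0$ and discard the same number of $W_j$'s to preserve~(b)---but it should be stated explicitly, since~(d) is exactly the property the paper uses downstream to control the minimum degree of the reduced graph $\Gamma$. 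A second cosmetic slip: in the balanced bipartite setting the relevant count of cross pairs is $k^{2}$, not $\binom{k}{2}$.
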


  \begin{defi}\label{def5.3}
    Given an edge-colored graph $G$ and a partition $\mathcal{P} = \{V_{0}, U_{1}, U_{2},...,U_{k},\\
    W_{1}, W_{2},..., W_{k}\}$, the $(\varepsilon,d)$-reduced graph $\Gamma$ is the graph whose vertices are $U_{1}, U_{2},...,U_{k}, \\
    W_{1}, W_{2},..., W_{k}$ and $U_{i}W_{j}$ is an edge if and only if $(U_{i}, W_{j})$ is $\varepsilon$-regular with respect to each colour of $G$ and its density in $G$ is at least $d$. We colour each edge $U_{i}W_{j}$ with majority color in $G[U_{i},W_{j}]$.
  \end{defi}

  The following lemma is used to lift a connected matching found in the reduced graph to a cycle in the original graph. It was proved by Figaj and {\L}uczak in \cite{LF}.

  \begin{lemma}\label{lemma6.2}
    Given $\varepsilon, d, k$ such that $0 <20\varepsilon <d <1$ there is an $n_{0}$ such that the following holds. Let $\mathcal{P}$ be an $(\varepsilon, k)$-equitable partition of a graph $G$ on $n\geq n_{0}$ vertices, and let $\Gamma$ be the corresponding $(\varepsilon, d)$-reduced graph. Suppose that $\Gamma$ contains a monochromatic $m$-connected matching. Then $G$ contains an even cycle of the same colour and of length $l$ for every even $l\leq 2(1-9\varepsilon d^{-1})m|U_{1}|$.
  \end{lemma}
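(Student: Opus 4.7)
Since this lemma is the standard ``lift a connected matching in the reduced graph to an even cycle in the host graph'' result of Figaj--{\L}uczak, the plan is to follow the template of the regularity method: convert the monochromatic connected matching in $\Gamma$ into a single long walk structure in $G$, then tune the length of the resulting cycle. Throughout, I will write $t = |U_1|$ for the common cluster size and assume, without loss of generality, that the guaranteed $m$-connected matching in $\Gamma$ is red, given by edges $e_1 = U_{i_1}W_{j_1}, \dots, e_m = U_{i_m}W_{j_m}$ lying in a single red component $C$ of $\Gamma$.

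First I would build a closed walk in $C$ that uses every matching edge. Fixing a spanning tree $T$ of $C$ and traversing each edge of $T$ twice yields a closed Eulerian walk; by inserting the matching edges $e_1,\dots,e_m$ as detours along this walk, I obtain a closed walk $W^*$ in $C$ that traverses each $e_s$ exactly once and otherwise uses only $O(|C|) = O(k)$ non-matching red edges. Every edge of $W^*$ corresponds to an $\varepsilon$-regular pair of red density at least $d$ in $G$, so $W^*$ will serve as the template for the cycle to be constructed in $G$.

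Next I would lift $W^*$ to $G$. For each matching edge $e_s = U_{i_s}W_{j_s}$ I reserve a long red path $P_s$ inside the pair $(U_{i_s}, W_{j_s})$, and between consecutive matching edges along $W^*$ I splice in a very short ``connector'' path through the intervening regular pairs of $C$, using only a constant number of vertices per cluster. The main technical fact used is the standard consequence of $\varepsilon$-regularity with density $\ge d$: in such a pair, all but at most $\varepsilon t$ vertices on each side have at least $(d-\varepsilon)t$ neighbours on the other side, so greedy path extension succeeds as long as fewer than $(1 - 9\varepsilon d^{-1})t$ vertices have already been used on each side of the pair. The loss factor $9\varepsilon d^{-1}$ and the specific bound on $l$ come from quantifying this greedy argument precisely, absorbing both the $\varepsilon$-atypical vertices and the vertices consumed by connectors into a single affine loss.

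To hit a \emph{prescribed} even length $l \le 2(1 - 9\varepsilon d^{-1}) m t$, I would distribute the budget $l$ minus the (bounded) connector length among the $m$ matching pairs and, inside each pair, construct a red path of the prescribed parity-respecting length with prescribed endpoints in the ``typical'' part of each side. Closing up the resulting walk yields a red even cycle of length exactly $l$. The main obstacle, and the real content of the proof, is precisely this last step: constructing paths of a \emph{given} length with \emph{given} endpoints in an $\varepsilon$-regular pair, as opposed to merely finding \emph{some} long path. This is handled by the now-standard ``path-length tuning'' argument (iteratively using that typical vertices have large degree into any sufficiently large subset to extend or contract the path by one edge), and is the source of the exact constant in the loss factor $(1 - 9\varepsilon d^{-1})$.
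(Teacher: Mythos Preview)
The paper does not actually prove this lemma; it simply attributes it to Figaj and {\L}uczak \cite{LF} and quotes the statement. Your sketch is a faithful outline of precisely that Figaj--{\L}uczak argument (closed walk in the monochromatic component of $\Gamma$ visiting all matching edges, long paths inside each $\varepsilon$-regular matching pair, short connectors between them, and path-length tuning to hit the prescribed even $l$), so there is nothing to compare: you and the paper are invoking the same source, only you have unpacked it.
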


  Now, we prove the following result.

  \begin{theo}
    Let $\alpha_{1}< \alpha_{2}, \alpha_{3}\geq \alpha_{1}+\alpha_{2}$, then $br(C_{2\lfloor \alpha_{1}n\rfloor}, C_{2\lfloor \alpha_{2}n\rfloor}, C_{2\lfloor \alpha_{3}n\rfloor}) \leq (\alpha_{1}+ \alpha_{2}+ \alpha_{3}+ o(1))n$ for $n$ sufficiently large.
  \end{theo}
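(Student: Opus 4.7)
The plan is a standard application of {\L}uczak's technique, combining Szemer\'edi's Regularity Lemma (Lemma \ref{lemma6.1}), the almost-complete-bipartite connected-matching result (Theorem \ref{theo4.1}), and the matching-to-cycle embedding (Lemma \ref{lemma6.2}).

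First I would fix $\delta > 0$ and set $N = \lceil (\alpha_{1} + \alpha_{2} + \alpha_{3} + \delta)n \rceil$, so that it suffices to show every 3-edge-coloring of $K_{N,N}$ contains $C_{2\lfloor\alpha_{i}n\rfloor}$ in color $i$ for some $i \in \{1,2,3\}$. I would choose a small $\eta > 0$ and define rescaled densities $\tilde\alpha_{i} = (1+\eta)\alpha_{i}/(\alpha_{1}+\alpha_{2}+\alpha_{3}+\delta)$; for $\eta$ small these inherit $\tilde\alpha_{1} < \tilde\alpha_{2}$ and $\tilde\alpha_{3} \geq \tilde\alpha_{1}+\tilde\alpha_{2}$ from the hypothesis and satisfy $\tilde\alpha_{1} + \tilde\alpha_{2} + \tilde\alpha_{3} < 1$. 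Let $\tilde\beta, \tilde\gamma$ be the constants associated to $\tilde\alpha_{1},\tilde\alpha_{2},\tilde\alpha_{3}$ by Theorem \ref{theo4.1}, pick $\tilde\varepsilon$ satisfying its constraint, then fix $\varepsilon \leq \tilde\varepsilon$ and $d$ with $20\varepsilon < d \leq 1$, and finally choose $k_{0}$ large enough.

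Next, apply Lemma \ref{lemma6.1} with $(\varepsilon, k_{0})$ to $G = K_{N,N}$, obtaining a partition $\{V_{0}, U_{1},\dots,U_{k}, W_{1},\dots,W_{k}\}$ with $k_{0} \leq k \leq K_{0}$ and $|U_{i}|=|W_{j}| = L \geq (1-\varepsilon)N/k$. Build the $(\varepsilon,d)$-reduced graph $\Gamma$ on the $2k$ cluster vertices, coloring each edge by the majority color on that pair. Since the host graph is complete bipartite every $\varepsilon$-regular pair has density $1$ in $G$, so the density condition is automatic and every $\varepsilon$-regular pair becomes an edge of $\Gamma$. Property (d) of Lemma \ref{lemma6.1} then gives that every cluster vertex of $\Gamma$ has at most $\varepsilon k \leq \tilde\varepsilon k$ non-neighbors. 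Now I would apply Theorem \ref{theo4.1} to $\Gamma$ with densities $\tilde\alpha_{i}$, parameter $\tilde\varepsilon$, and ``$n$''-parameter equal to $k$: for small $\eta, \tilde\varepsilon$ the size condition $k \geq (\tilde\alpha_{1}+\tilde\alpha_{2}+\tilde\alpha_{3} + (\tilde\beta+(\tilde\alpha_{1}+\tilde\alpha_{2}+\tilde\alpha_{3})\tilde\gamma)\tilde\varepsilon)\,k$ holds because $\tilde\alpha_{1}+\tilde\alpha_{2}+\tilde\alpha_{3}$ is bounded away from $1$, and the non-neighbor bound matches. The theorem yields a monochromatic $\lfloor\tilde\alpha_{i}k\rfloor$-connected matching $M$ in $\Gamma$ in some color $i$. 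Feeding $M$ into Lemma \ref{lemma6.2} produces in $G$ an even cycle in color $i$ of every even length up to $2(1-9\varepsilon d^{-1})\lfloor\tilde\alpha_{i}k\rfloor L$; a direct estimate using $L \geq (1-\varepsilon)N/k$ and the definition of $\tilde\alpha_{i}$ shows this is at least $2\lfloor\alpha_{i}n\rfloor$, giving the desired monochromatic cycle.

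The main obstacle is the parameter bookkeeping: the slack $\delta$ on the right-hand side must simultaneously absorb four losses, namely the inflation from $\alpha_{i}$ to $\tilde\alpha_{i}$, the additive error $(\tilde\beta+(\tilde\alpha_{1}+\tilde\alpha_{2}+\tilde\alpha_{3})\tilde\gamma)\tilde\varepsilon$ in Theorem \ref{theo4.1}, the embedding loss $(1-9\varepsilon d^{-1})$ in Lemma \ref{lemma6.2}, and the cluster-size loss from the exceptional set $V_{0}$. The correct order is to fix $\delta$ first, then pick $\eta$ so that $(1+\eta)(1-\varepsilon)(1-9\varepsilon d^{-1}) \geq 1$ will hold for the eventual $\varepsilon, d$, then invoke Theorem \ref{theo4.1} to determine the admissible $\tilde\varepsilon$, and finally tune $\varepsilon, d, k_{0}$ compatibly so that both regularity and embedding deliver the precise bound $(\alpha_{1}+\alpha_{2}+\alpha_{3}+\delta)n$.
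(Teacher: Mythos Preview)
Your proposal is correct and follows essentially the same approach as the paper: apply the bipartite multicolor regularity lemma, pass to the reduced graph (which is almost complete since the host is $K_{N,N}$), invoke Theorem~\ref{theo4.1} to obtain a large monochromatic connected matching, and lift it to a cycle via Lemma~\ref{lemma6.2}. The only cosmetic difference is that the paper keeps the original $\alpha_i$ and rescales the ``$n$''-parameter of Theorem~\ref{theo4.1} to $n' = k/(\alpha_1+\alpha_2+\alpha_3+\zeta\varepsilon')$, whereas you rescale the $\alpha_i$ to $\tilde\alpha_i$ and take the ``$n$''-parameter to be $k$; these are equivalent reparametrizations of the same inequality.
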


  \begin{proof}
    Let $\mu >0$ and $N = (\alpha_{1}+\alpha_{2}+\alpha_{3}+\mu)n$. Suppose $n$ is sufficiently large and $\varepsilon'$ is sufficiently small. Apply the Regularity Lemma (Lemma \ref{lemma6.1}) to graph $G$ with parameter $\varepsilon'$, and let $\mathcal{P}$ be a partition satisfying the conditions of the lemma. Consider the corresponding $(\varepsilon' ,1)$-reduced graph $\Gamma$. Note that by $(a)$ and $(b)$ in Lemma \ref{lemma6.1}, $\Gamma$ is a balanced bipartite graph. Denote the number of vertices in each side by $k$ so that $\mathcal{P} = \{V_{0}, U_{1}, U_{2},...,U_{k}, W_{1}, W_{2},..., W_{k}\}$. Furthermore, every pair $(U_{i},W_{j})$ has density 1 in the original graph. Hence, $U_{i}W_{j}$ is an edge in $\Gamma$ if $(U_{i},W_{j})$ is $\varepsilon'$-regular with respect to each color. From $(d)$, $\Gamma$ has minimum degree at least $(1-2\varepsilon')k$.

    Let $n'=\frac{k}{\alpha_{1}+ \alpha_{2}+ \alpha_{3}+\zeta \varepsilon'}\geq \frac{k}{\alpha_{1}+ \alpha_{2}+ \alpha_{3}+1}$ ($\zeta =2(\alpha_{1} +\alpha_{2} +\alpha_{3}+1)(\beta+(\alpha_{1}+\alpha_{2}+ \alpha_{3}) \gamma)$, $\varepsilon'$ is sufficiently small and $\beta, \gamma$ are the same as in section 5.1). Since every vertex in one side of $\Gamma$ has at most $2\varepsilon' k\leq 2(\alpha_{1}+ \alpha_{2}+ \alpha_{3}+ 1) \varepsilon' n'$ non-neighbors. Apply Theorem \ref{theo4.1} by taking  $\varepsilon = 2(\alpha_{1}+ \alpha_{2}+ \alpha_{3}+ 1)\varepsilon' ,n= n' ,N =k$. Then $\Gamma$ contains a red $\alpha_{1}n'$-connected matching or a blue $\alpha_{2}n'$-connected matching or a green $\alpha_{3}n'$-connected matching. W.L.O.G, assume that $\Gamma$ contains a red $\alpha_{1}n'$-connected matching. Applying Lemma \ref{lemma6.2}, $G$ contains a red even cycle of length $l$ for any $l\leq 2(1-9\varepsilon')\alpha_{1} n' |U_{1}|$. Note that:
    \begin{equation*}
      \begin{aligned}
      2(1-9\varepsilon')\alpha_{1} n' |U_{1}|
      & = 2(1-9\varepsilon') \alpha_{1} \cdot \frac{k}{\alpha_{1}+ \alpha_{2}+ \alpha_{3}+\zeta \varepsilon'} \cdot |U_{1}| \\
      & \geq  2(1-9\varepsilon')(1-\varepsilon') \cdot \frac{\alpha_{1} N}{\alpha_{1}+ \alpha_{2}+ \alpha_{3}+\zeta \varepsilon'} \\
      & = 2(1-9\varepsilon')(1-\varepsilon') \cdot \frac{\alpha_{1}(\alpha_{1}+\alpha_{2}+ \alpha_{3}+\mu)n}{\alpha_{1}+ \alpha_{2}+ \alpha_{3}+\zeta \varepsilon'}\\
      & \geq 2\alpha_{1}n,
    \end{aligned}
    \end{equation*}
    where the first inequality follows as $k|U_{1}| =N-\frac{|V_{0}|}{2}\geq (1-\varepsilon')N$, and for the last since $\varepsilon'$ is sufficiently small compared to $\mu$. Hence, there is a red cycle of length at least $2\lfloor \alpha_{1}n\rfloor$.\q
  \end{proof}

  Applying the same procedure by  applying Theorem \ref{theo4.2} and Theorem \ref{theo4.3}, we can obtain:

  \begin{theo}\label{theo6.1}
    \[br(C_{2\lfloor \alpha_{1}n\rfloor}, C_{2\lfloor \alpha_{2}n\rfloor}, C_{2\lfloor \alpha_{3}n\rfloor})\leq \left \{
    \begin{array}{lll}
      (\alpha_{1}+ 2\alpha_{3}+ o(1))n &\mbox {if $\alpha_{1}< \alpha_{2}< \alpha_{3}\leq \frac{\alpha_{1}}{2}+\alpha_{2},$}\\
      (2\alpha_{1}+ 2\alpha_{2}+ o(1))n &\mbox {if $\alpha_{1}< \alpha_{2}, \frac{\alpha_{1}}{2}+\alpha_{2}< \alpha_{3}< \alpha_{1}+\alpha_{2},$}\\
      (\alpha_{1}+ \alpha_{2}+ \alpha_{3}+ o(1))n &\mbox {if $\alpha_{1}< \alpha_{2}, \alpha_{3}\geq \alpha_{1}+\alpha_{2}.$}
    \end{array}
    \right.
    \]
  \end{theo}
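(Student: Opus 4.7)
The first case ($\alpha_{3}\geq \alpha_{1}+\alpha_{2}$) has already been established in the preceding theorem, so the plan is to handle the remaining two cases by exactly the same procedure, simply replacing the invocation of Theorem \ref{theo4.1} by Theorem \ref{theo4.2} for the regime $\alpha_{1}<\alpha_{2}<\alpha_{3}\leq \tfrac{\alpha_{1}}{2}+\alpha_{2}$ and by Theorem \ref{theo4.3} for the regime $\alpha_{1}<\alpha_{2},\ \tfrac{\alpha_{1}}{2}+\alpha_{2}<\alpha_{3}<\alpha_{1}+\alpha_{2}$. The target upper bounds are $N=(\alpha_{1}+2\alpha_{3}+\mu)n$ and $N=(2\alpha_{1}+2\alpha_{2}+\mu)n$ respectively, with $\mu>0$ arbitrarily small.

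Fix $\mu>0$, choose $\varepsilon'$ sufficiently small (compared to $\mu$ and the constants $\beta,\gamma$ taken from the relevant version of Theorem \ref{theo4.2} or \ref{theo4.3}), and let $n$ be large. Apply Lemma \ref{lemma6.1} to $G=K_{N,N}$ with parameter $\varepsilon'$ to obtain a partition $\mathcal{P}=\{V_{0},U_{1},\dots,U_{k},W_{1},\dots,W_{k}\}$ and form the $(\varepsilon',1)$-reduced graph $\Gamma$. As in the proof of the first case, $\Gamma$ is balanced bipartite with $k$ vertices on each side and every vertex has at most $2\varepsilon' k$ non-neighbors. Set
\[
n'=\frac{k}{\alpha_{1}+2\alpha_{3}+\zeta\varepsilon'} \quad \text{resp.} \quad n'=\frac{k}{2\alpha_{1}+2\alpha_{2}+\zeta\varepsilon'},
\]
where $\zeta$ is chosen as in the previous proof (large enough relative to the constants $\beta,\gamma$ appearing in the corresponding theorem of Section 5.1), so that the hypotheses of Theorem \ref{theo4.2}, respectively Theorem \ref{theo4.3}, hold for $\Gamma$ with $\varepsilon=2(\alpha_{1}+2\alpha_{3}+1)\varepsilon'$, respectively $\varepsilon=2(2\alpha_{1}+2\alpha_{2}+1)\varepsilon'$.

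Applying the appropriate theorem of Section 5.1, $\Gamma$ contains either a red $\lfloor \alpha_{1}n'\rfloor$-connected matching, a blue $\lfloor \alpha_{2}n'\rfloor$-connected matching, or a green $\lfloor \alpha_{3}n'\rfloor$-connected matching. Lemma \ref{lemma6.2} then lifts any such connected matching of size $\alpha_{i}n'$ in $\Gamma$ to a monochromatic even cycle in $G$ of every even length up to $2(1-9\varepsilon'){\alpha_{i}n'}|U_{1}|$. Using $k|U_{1}|\geq (1-\varepsilon')N$ and the definition of $n'$, the same computation as in the displayed inequality of the first case gives
\[
2(1-9\varepsilon')\alpha_{i}n'|U_{1}|\geq 2(1-9\varepsilon')(1-\varepsilon')\cdot\frac{\alpha_{i}(\text{target}+\mu)n}{\text{target}+\zeta\varepsilon'}\geq 2\alpha_{i}n
\]
once $\varepsilon'$ is small enough relative to $\mu$; here ``target'' denotes $\alpha_{1}+2\alpha_{3}$ or $2\alpha_{1}+2\alpha_{2}$ as appropriate. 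Hence $G$ contains a monochromatic cycle of length at least $2\lfloor \alpha_{i}n\rfloor$ in the corresponding color, yielding the claimed upper bound.

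The only step requiring any genuine thought is ensuring that the hypotheses of Theorem \ref{theo4.2} or \ref{theo4.3} are satisfied on the reduced graph (i.e., that $N\geq (\text{target}+(\beta+(\text{target})\gamma)\varepsilon)n'$ and that the non-neighbor bound is consistent), which is purely bookkeeping on the constants $\beta,\gamma,\zeta$; everything else is an identical copy of the argument written out for the case $\alpha_{3}\geq \alpha_{1}+\alpha_{2}$. \q
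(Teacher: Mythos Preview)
Your proposal is correct and matches the paper's approach exactly; indeed the paper itself dispatches Theorem \ref{theo6.1} in a single line (``Applying the same procedure by applying Theorem \ref{theo4.2} and Theorem \ref{theo4.3}, we can obtain\dots''), so you have in fact written out more of the bookkeeping than the authors did. The only cosmetic quibble is that what you call ``the first case'' is listed third in the statement, but since you identify it explicitly as $\alpha_{3}\geq \alpha_{1}+\alpha_{2}$ there is no ambiguity.
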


  It is easy to see that $br(C_{2\lfloor \alpha_{1}n\rfloor}, C_{2\lfloor \alpha_{2}n\rfloor}, C_{2\lfloor \alpha_{3}n\rfloor})\geq r(\lfloor \alpha_{1}n\rfloor, \lfloor \alpha_{2}n\rfloor, \lfloor \alpha_{3}n\rfloor)$. By Theorem \ref{theo3}, we have:

  \begin{theo}\label{theo6.2}
    \[br(C_{2\lfloor \alpha_{1}n\rfloor}, C_{2\lfloor \alpha_{2}n\rfloor}, C_{2\lfloor \alpha_{3}n\rfloor})\geq \left \{
    \begin{array}{lll}
      (\alpha_{1}+ 2\alpha_{3}+ o(1))n &\mbox {if $\alpha_{1}< \alpha_{2}< \alpha_{3}\leq \frac{\alpha_{1}}{2}+\alpha_{2},$}\\
      (2\alpha_{1}+ 2\alpha_{2}+ o(1))n &\mbox {if $\alpha_{1}< \alpha_{2}, \frac{\alpha_{1}}{2}+\alpha_{2}< \alpha_{3}< \alpha_{1}+\alpha_{2},$}\\
      (\alpha_{1}+ \alpha_{2}+ \alpha_{3}+ o(1))n &\mbox {if $\alpha_{1}< \alpha_{2}, \alpha_{3}\geq \alpha_{1}+\alpha_{2}.$}
    \end{array}
    \right.
    \]
  \end{theo}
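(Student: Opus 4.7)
The plan is to reduce the bipartite Ramsey number lower bound directly to the connected matching Ramsey number $r(k,l,m)$ already computed in Theorem \ref{theo3}. The author's hint makes this explicit: the key inequality to establish is
\[
  br(C_{2k_{1}}, C_{2k_{2}}, C_{2k_{3}}) \;\geq\; r(k_{1},k_{2},k_{3})
\]
for all integers $k_{1},k_{2},k_{3} \geq 2$. Once this is in hand, substituting $k_{i} = \lfloor \alpha_{i} n \rfloor$ and invoking Theorem \ref{theo3} gives the three asymptotic bounds.

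To prove the inequality, I would take any $N < r(k_{1},k_{2},k_{3})$ and, by Definition \ref{def1.1}, pick a 3-edge-coloring of $K_{N,N}$ containing no monochromatic $k_{i}$-connected matching in color $i$ for each $i$. The observation is that a monochromatic even cycle $C_{2k_{i}}$ in color $i$ would immediately produce a $k_{i}$-connected matching in color $i$: the cycle has $2k_{i}$ vertices (alternating between $V_{1}$ and $V_{2}$) and admits a perfect matching of size $k_{i}$ on its vertex set, and every edge of the cycle lies in a single color-$i$ component. Hence the chosen coloring also avoids the three cycles, so $br(C_{2k_{1}}, C_{2k_{2}}, C_{2k_{3}}) > N$, and letting $N \to r(k_{1},k_{2},k_{3}) - 1$ yields the inequality.

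For the asymptotics, I would plug $k_{i} = \lfloor \alpha_{i} n \rfloor$ into Theorem \ref{theo3}. In the first regime, for any fixed $\alpha_{1} < \alpha_{2} < \alpha_{3} < \tfrac{\alpha_{1}}{2} + \alpha_{2}$, for $n$ large enough the integer inequality $\lfloor \alpha_{3} n \rfloor \leq \lfloor \alpha_{2} n \rfloor + \tfrac{\lfloor \alpha_{1} n \rfloor - 1}{2}$ holds, so Theorem \ref{theo3} gives $r = k_{1} + 2k_{3} - 2 = (\alpha_{1} + 2\alpha_{3})n + O(1)$. The middle and high regimes are similar, yielding $(2\alpha_{1} + 2\alpha_{2})n + O(1)$ and $(\alpha_{1} + \alpha_{2} + \alpha_{3})n + O(1)$ respectively. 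At the two boundaries the two adjacent formulas agree: when $\alpha_{3} = \tfrac{\alpha_{1}}{2} + \alpha_{2}$ one checks $\alpha_{1} + 2\alpha_{3} = 2\alpha_{1} + 2\alpha_{2}$, and when $\alpha_{3} = \alpha_{1} + \alpha_{2}$ one checks $2\alpha_{1} + 2\alpha_{2} = \alpha_{1} + \alpha_{2} + \alpha_{3}$, so boundary rounding is absorbed by the $o(1)$ term and no case analysis of which integer regime actually applies is needed.

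There is essentially no obstacle here: the hard work was done in proving Theorem \ref{theo3}, and the reduction from cycle-avoidance to connected-matching-avoidance is the trivial direction (the nontrivial direction, going from matchings back to cycles, is handled by the Regularity Lemma and was already used for the upper bound in Theorem \ref{theo6.1}). The only very minor care required is confirming that the integer parts $\lfloor \alpha_{i} n \rfloor$ satisfy the appropriate strict inequalities $k < l < m$ for large $n$ and that the boundary alignments above hold, which are elementary arithmetic checks.
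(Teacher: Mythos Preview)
Your proposal is correct and follows exactly the paper's approach: the paper simply states ``It is easy to see that $br(C_{2\lfloor \alpha_{1}n\rfloor}, C_{2\lfloor \alpha_{2}n\rfloor}, C_{2\lfloor \alpha_{3}n\rfloor})\geq r(\lfloor \alpha_{1}n\rfloor, \lfloor \alpha_{2}n\rfloor, \lfloor \alpha_{3}n\rfloor)$'' and then invokes Theorem~\ref{theo3}, which is precisely the reduction (cycle $\Rightarrow$ connected matching) plus substitution that you spell out.
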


  Combining Theorem \ref{theo6.1} and Theorem \ref{theo6.2}, we complete the proof of Theorem \ref{theo2}.


\begin{thebibliography}{JluR00}

\bibitem{BS} F. S. Benevides and J. Skokan, The 3-colored Ramsey number of even cycles, {\it J. Combin. Theory Ser. B}, 99 (2009), no.4, 690-708.

\bibitem{JB73} J. A. Bondy, P. Erd\H{o}s, Ramsey numbers for cycles in graphs, {\it J. Combin. Theory Ser. B}, 14 (1973), 46-54.

\bibitem{BLS} Matjia Buci\'c, Shoham Letzter, Benny Sudakov, Three colour bipartite Ramsey number of cycles and paths. arXiv:1803.03689(2018), preprint.

\bibitem{C} D. Conlon, A new upper bound for the bipartite Ramsey problem, {\it J. Graph Theory}, 58 (2008), no.4, 351-356.

\bibitem{DJ} A. N. Day and J. R. Johnson, Multicolour Ramsey numbers of odd cycles, {\it J. Combin. Theory Ser. B}, 154 (2017), 56-63.

\bibitem{FS} R. J. Faudree and R. H. Schelp, All Ramsey numbers for cycles in graphs, {\it Discr. Math.}, 8 (1974), 313-329.

\bibitem{FS1} R. J. Faudree and R. H. Schelp, Path-path Ramsey-type numbers for the complete bipartite graph, {\it J. Combin. Theory Ser. B}, 19 (1975), no.2, 161-173.

\bibitem{LF} A. Figaj and T. {\L}uczak, The Ramsey number for a triple of long even cycles, {\it J. Combin. Theory Ser. B}, 97 (2007), no.4, 584-596.

\bibitem{GG} L. Gerencs\'er and A. Gy\'arf\'as, On Ramsey-type problems, {\it Ann. Univ. Sci. Budapest. E\"otv\"os Sect. Math.}, 10 (1967), 167-170.

\bibitem{GHO} W. Goddard, M. A. Henning and O. R. Oellermann, Bipartite Ramsey numbers and Zarankiewicz number, {\it Discrete Math.}, 219 (2008), no.1, 167-170.

\bibitem{GL} A. Gy\'arf\'as and J. Lehel, A Ramsey-type problem in directed and  bipartite graphs, {\it Pereodica Math. Hung.}, 3 (1973), no.3-4, 299-304.

\bibitem{GRSS} A. Gy\'arf\'as, M. Ruszink\'o, G. N. S\'ark\"ozy and E. Szemer\'edi, Three-colored Ramsey numbers for paths, {\it Combinatorica}, 27 (2007), no. 1, 35-69.

\bibitem{HH} J. H. Hattingh and M. A. Henning, Bipartite Ramsey theory, {\it Utilitas Mathematica}, 53 (1998), 217-230.

\bibitem{JS} M. Jenssen and J. Skokan, Exact Ramsey numbers of odd cycles via nonlinear optimisation, arXiv:1608.05705(2016), preprint.

\bibitem{J} E. J. Joubert, Some generalized bipartite Ramsey numbers involving short cycles, {\it Graphs and Combin}, 33 (2017), 433-448.

\bibitem{KSS} Y. Kohayakawa, M. Simonovits and J. Skokan, The 3-colored Ramsey number of odd cycles, {\it Electron. Note Discr. Math.}, 19 (2005), 397-402.

\bibitem{LP} S. Liu and Y. Peng, An upper bound of bipartite Ramsey numbers of large cycles in multicolorings, manuscript.

\bibitem{LP1} S. Liu and Y. Peng, Multicolored bipartite Ramsey numbers of large cycles, arXiv:1808.10127(2018), preprint.

\bibitem{L} T. {\L}uczak, $R(C_{n},C_{n},C_{n})\leq (4+o(1))n$, {\it J. Combin. Theory Ser. B}, 75 (1999), no.2 174-187.

\bibitem{R} V. Rosta, On a Ramsey-type problem of J. A. Bondy and P. Erd\H{o}s, i, ii, {\it J. Combin. Theory Ser. B}, 15 (1973), 105-120.

\bibitem{SLL} L. Shen, Q. Lin, Q. Liu, Bipartite Ramsey numbers for bipartite graphs of small bandwidth, {\it Electron J. Combin.}, 25(2) (2018), \#P2.16.

\bibitem{ZS} R. Zhang and Y. Sun, The bipartite Ramsey numbers $b(C_{2m};K_{2,2})$, {\it Electron. J. Combin.}, 18 (2011), no.1, Paper 51, 10.

\bibitem{ZSW} R. Zhang, Y. Sun and Y. Wu, The bipartite Ramsey numbers $b(C_{2m};C_{2n})$, {\it Int. J. Math. Comp. Sci. Eng.}, 7 (2013), no.1, 152-155.

\end{thebibliography}
\end{document}